\newcommand{\N}{\mathbb{N}}
\newtheorem*{cor}{Corollary}
\newtheorem{theorem}{Theorem}
\newtheorem{proposition}[theorem]{Proposition}
\newtheorem{lemma}[theorem]{Lemma}
\newtheorem{definition}[theorem]{Definition}
\newcommand{\tmax}{t_{\mathrm{max}}}
\newcommand{\wb}{\widetilde{B}}
\newcommand{\wphi}{\widetilde{\phi}}
\renewcommand{\phi}{\varphi}
\let\epsilon=\varepsilon
\newcommand{\R}{\mathbb R}
\def\crn#1#2{{\vcenter{\vbox{
\hbox{\kern#2pt \vrule width.#2pt height#1pt
 }
\hrule height.#2pt}}}}
\newcounter{mnotecount}[section]
\let\oldmarginpar\marginpar
\renewcommand\marginpar[1]{\-\oldmarginpar[\raggedleft\footnotesize #1]%
{\raggedright\footnotesize #1}}
\begin{document}

\title[Homogeneous RG-2 Flow]{Second-Order Renormalization Group Flow of Three-Dimensional Homogeneous Geometries}

\author{Karsten Gimre}
\address[Gimre]{Department of Mathematics, Columbia University, New York City, New York}
\email{gimre@math.columbia.edu}
\author{Christine Guenther}
\address[Guenther]{Department of Mathematics and Computer Science, Pacific University,
Forest Grove, Oregon, 97116}
\email{guenther@pacificu.edu}
\author{James Isenberg}
\address[Isenberg]{Department of Mathematics, University of Oregon, Eugene, Oregon}
\email{isenberg@uoregon.edu}

\date{\today}

\keywords{RG flow; homogeneous geometries.}
\subjclass[2000]{Primary  53C44; Secondary  35K55}
\thanks{Research supported in part by NSF grant PHY-0968612 at Oregon and DGE-1144155 at Columbia.}

\begin{abstract}
We study the behavior of the second order Renormalization Group flow on locally homogeneous metrics on closed three-manifolds. In the cases $\mathbb R^3$  and $\text{SO}(3)\times \R$, the flow is qualitatively the same as the Ricci flow. In the cases $\text{H}(3)$ and $\text{H}(2)\times \R$, if
the curvature is small, then the flow expands as in the Ricci flow case, while if  the curvature is large,
then the flow contracts and forms a singularity in finite time.  The main focus of the paper is the flow on the $\text{SU}(2)$, $\text{Nil}$, $\text{Sol}$, and $\text{SL}(2,\R)$  3-geometries, with two of the three principal directions set equal. The configuration spaces for these geometries are two dimensional, and we can consequently apply phase plane techniques to the study. For the $\text{SU}(2)$ case, the flow is everywhere qualitatively the same as Ricci flow. For the $\text{Nil}$,  $\text{Sol}$, and $\text{SL}(2,\R)$  cases, we show that the configuration space is partitioned into two regions which are delineated by a solution curve of the flow that depends on the coupling parameter: in one of the regions, the flow develops cigar or pancake singularities characteristic of the Ricci flow, while in the other both directions shrink. In the $\text{Nil}$ case we obtain a characterization of the full 3-dimensional flow. 
\end{abstract}

\maketitle

\section{Introduction}\label{intro}

The Ricci flow for a family of metrics $g$ on a manifold $M^n$ is well-known to be the first-order approximation to the Renormalization Group (RG) flow corresponding to perturbative analyses of nonlinear sigma model quantum field theories from a world sheet into $(M^n,g)$ \cite{FriedPRL, FriedAP, Carf}:
\begin{equation*}
\partial_t g_{ij} = -\alpha R_{ij} -\frac{\alpha^2}{2} R_{iklm} R_j^{klm} + \mathcal O(\alpha^3).
\end{equation*}
 Here we use the parameter $\alpha$ to denote the (positive) coupling constant for such quantum field theories. If we carry out appropriate rescalings of the deformation parameter $t$ for the RG flow, then the PDE generating the second-order approximation (in $\alpha$) to the RG flow can be written as
\begin{equation}
\label{RG-2eqn}
\partial_t g_{ij} = -2R_{ij} -\frac{\alpha}{2} R_{iklm} R_j^{klm},
\end{equation}
where $R_{ij}$ and $R^i_{klm}$ are the Ricci and the Riemann curvature tensors corresponding to the (evolving) metric $g_{ij}(t)$, and indices are lowered and raised using $g_{ij}(t)$ and its inverse $g^{kl}(t)$.

There is no consensus among researchers concerning whether it is physically useful to consider the second-order terms in the RG flow while ignoring the influence of higher order terms (which involve cubic and higher order products of the curvature). Whether or not this turns out to be the case, the flow equation (\ref{RG-2eqn}) is mathematically interesting as a (non-linear) deformation of the Ricci flow (whose governing equation is obtained from (\ref{RG-2eqn}) by setting $\alpha=0$),
and accordingly some  of the mathematical features of its flow have been studied in recent years \cite{O, GO}.

In this work, we continue the study of the flow generated by (\ref{RG-2eqn}) (which we label as the ``RG-2 flow equation"), focusing on the following issue: If we fix a family of geometries (preserved by both the Ricci flow and the RG-2 flow) and fix a value of the parameter $\alpha$, does the RG-2 flow have asymptotic behavior similar to that of  the Ricci flow? Further, how does the asymptotic behavior of the RG-2 flow depend on $\alpha?$ 

To study this issue, we have chosen to work with sets of geometries for which the behavior of Ricci flow is well understood: families of 3-dimensional geometries which are locally homogeneous. Since we are not concerned here with the topology of $M^3$, and since every locally homogeneous geometry $(M^n,g)$ lifts to a homogeneous geometry on the universal cover of $M^n$, we assume that the geometries $(M^3,g)$ of interest are all homogeneous.\footnote{A Riemannian geometry $(M, g)$ is defined to be \emph{locally homogeneous} if, for every pair of points $p,q\in M$, there exist neighborhoods $U_p$ of $p$ and $V_q$ of $q$ such that there is an isometry $\Psi_{pq}$ mapping $(U_p, g|_{U_p})$ to $(V_q,g|_{V_q})$ with $\Psi_{pq}(p) = q$. Generally, these local isometries do not extend to isometries of the whole space $(M,g)$. If they do, then the geometry is \emph{homogeneous}, which means that for every pair of points  $p,q\in M$ there is exists an isometry $\Phi_{pq}$ of  $(M,g)$ which maps $p$ to $q$. In this case, the isometry group of the geometry acts transitively on $M$.}
The full range of 3-dimensional homogeneous geometries, and the behavior of the Ricci flow on these geometries, is discussed elsewhere
(see \cite{IJ, KM, GP}\footnote{Note that in some of these references, volume-normalized Ricci flow rather than standard Ricci flow is considered.}). Throughout most of this paper we are concerned in particular with the comparing the two flows on the $\text{SU}(2)$, $\text{Nil}$, $\text{Sol}$, and $\text{SL}(2,\R)$ families of locally homogeneous 3-geometries with two of the three principal directions set equal.  While this extra  condition (which is known as \emph{local rotational symmetry}, or ``\emph{LRS}"  to those who study spatially homogeneous relativistic cosmologies \cite{RS})  is not always preserved by the RG-2 flow, in the cases that we study here, it is. 

A naive comparison of the RG-2 equation (\ref{RG-2eqn}) with the Ricci flow equation $\partial_t g_{ij} = -2R_{ij}$ suggests that the second term in equation (\ref{RG-2eqn}) becomes important -- and can lead to differences in asymptotic behavior of the two flows -- if (roughly speaking) the product of $\alpha$ times the curvature is comparable to unity  either initially or at some time along the flow.  Our analysis below of the flows for the $\text{Nil}$,  $\text{Sol}$,  $\text{SL}(2,\R)$, and $\text{H}(3)$ families of geometries bears this out. On the other hand, we find that for the $\R^3, \text{SO}(3)\times \R$ and  $\text{SU}(2)$ cases, the magnitude of $\alpha \times$curvature does not affect the qualitative asymptotic behavior of RG-2 flow, since, as seen below, in these cases both terms in equation (\ref{RG-2eqn}) have the same sign for all geometries.

For the $\text{SU}(2)$ family of homogeneous LRS 3-geometries, we find that for both the RG-2 flow and the the Ricci flow, if we start at any initial geometry, then all three directions eventually contract  (``shrinker asymptotics") and approach isotropy, with a curvature singularity reached in finite time.  We note that for both of these flows, the phase plane  divides into two regions: If we denote by $(A,B,C)$ the diagonal components of the metric in the Milnor frame, and if we choose $B=C$ to impose the LRS condition,
then the two regions are divided by the solution trajectory $A=B$.

For $\text{Nil}$,  the Ricci flow for all initial geometries is ``immortal" (non-singular for all future time) and is characterized by two expanding directions and one shrinking direction (``pancake asymptotics"). In the RG-2 flow case, however, again setting $B=C$, we find that the phase plane is partitioned into two regions of differing behavior, this time with the boundary given by the curve $A=\frac{2}{3\alpha}B^2.$ For initial geometries with $A_0<\frac{2}{3\alpha}B_0^2$, we have $\tmax=\infty$ and $A(t) \rightarrow 0, B(t)\rightarrow \infty$ as $t \rightarrow \tmax$ (here $\tmax$ is defined so  that $[0,\tmax)$ is the maximal interval of existence of the flow); this behavior matches that of the Ricci flow. For initial geometries with $A_0\geq\frac{2}{3\alpha}B_0^2$,  one has $\tmax <\infty$ and $A(t), B(t) \rightarrow 0$ as $t \rightarrow \tmax.$ For the $\text{Nil}$ family of geometries, unlike the others, we find that 
the full 3-dimensional system of ODEs (with no LRS condition imposed) can be reduced to the LRS case, and so we have the following complete description of the RG-2 flow for all $\text{Nil}$ geometries: Given $A_0,$ $B_0$, and $C_0$, if $\alpha \ge \frac{2B_0 C_0}{3A_0}$ then $\tmax < \infty, $ and $A(t), B(t), C(t) \rightarrow 0$ as $t \rightarrow \tmax$. If, on the other hand,  $\alpha < \frac{2 B_0 C_0}{3A_0}$, then $\tmax = \infty$ and $A(t) \rightarrow 0,$ while $B(t),C(t)\rightarrow \infty$ as $t\rightarrow \infty$. 

 The Ricci flow for all (3-dimensional, LRS) $\text{Sol}$ initial geometries is immortal and is characterized by  ``cigar asymptotics" (one expanding direction and two shrinking directions for volume-normalized flow; one expanding and two unchanging directions for unnormalized flow). For the RG-2 flow with the LRS condition $A=C$, the phase plane is partitioned into two regions by the line $B=2\alpha$. For initial geometries with $B_0\ge 2\alpha$ cigar  asymptotics develop with $\tmax  = \infty$, while for those such that $B_0<2\alpha$, shrinker asymptotics occur, with $\tmax<\infty$. 

 For $\text{SL}(2,\R)$,  the behavior is similar to the $\text{Nil}$ case: The Ricci flow for all (LRS) initial geometries is immortal and is characterized by pancake asymptotics. For the RG-2 flow, this time with $B=A$, the phase plane is partitioned into two regions by a solution curve $\phi$ that converges to $C=0$, $ A = 2\alpha.$ Solutions with initial conditions $(C_0, A_0)$ that satisfy $A_0 > \phi(C_0)$ develop pancake asymptotics with $\tmax=\infty$, while those with $A_0 \le \phi(C_0)$ either develop shrinker asymptotics with $\tmax < \infty$, or converge to $(0,2\alpha)$.

What happens to the dichotomous behavior of the RG-2 flow on the $\text{Nil}$, $\text{Sol}$,  $\text{SL}(2,\R)$, and $\text{H}(3)$  geometries if one varies the parameter $\alpha$? In accord with the assessment that the magnitude of  $\alpha \times$curvature plays a key role in determining if RG-2 flow differs qualitatively from Ricci flow, we find that varying $\alpha$ simply results in a uniform shift of the boundary between those initial geometries whose RG-2 flow is like Ricci flow (with pancake or cigar asymptotics), and those whose RG-2 flow has shrinking asymptotics (unlike Ricci flow). We emphasize that for every positive value of $\alpha$, no matter how small, the same dichotomous behavior is found to occur. 

The proofs of our results here rely strongly on the tools of phase plane analysis for 2-dimensional dynamical systems. These tools are applicable because, first of all, if either Ricci flow or RG-2 flow is restricted to a family of locally homogeneous geometries, the flow PDEs become systems of ordinary differential equations (see Section \ref{Analysis}). Further, for three-dimensional geometries, both flows preserve the diagonality of the metric (in the Milnor frame), so without loss of generality one may reduce the number of dynamical variables from six to three. For Ricci flow, volume-normalized flow is easily implemented, further reducing the number of free functions to two \cite{IJ}. Volume normalization is not, however, readily implemented for RG-2 flow, so in our studies here we generally add an extra restriction: that two of the metric coefficients are initially equal. In the cases studied, this LRS condition is preserved by both flows. Hence for these cases, both flows may be treated as two-dimensional dynamical systems.

Before carrying out the analyses of the flows for these families of geometries, we briefly review some of the results obtained in earlier studies of RG-2 flow (see Section \ref{Prev}), and also (in Sections \ref{ConstCurv} and \ref{DirProd}) examine the behavior of RG-2 flow for constant curvature geometries and for geometries which are the direct product of $\R$ with two-dimensional constant curvature geometries.
We next, in Section \ref{LocHom}, write out the RG-2 equations for locally homogeneous 3-geometries generally. Finally, in Section \ref{Analysis} we state and prove our main results.

We note that there is some overlap of our study  with the largely numerical work of Das, Prabhu, and Kar \cite{Das}.

\section{Previous Results for RG-2 Flow}
\label{Prev}

The primary focus of
 this paper is on ways in which the RG-2 flow differs from the Ricci flow. Concerning this difference, one of the 
striking features of the RG-2 flow equation (\ref{RG-2eqn}) is that if one writes it out as a PDE system for the metric coefficients (with respect to a coordinate basis),  one finds that it is generally not parabolic, even if one adds a DeTurck-type diffeomorphism term (effectively choosing a coordinate gauge). Hence, in contrast to the situation for Ricci flow, the RG-2 initial value problem on closed manifolds (or otherwise) is not generally well-posed. One of the simpler manifestations of this feature is seen if one considers the RG-2 flow on a 2-dimensional manifold $M^2$, in which case the flow preserves the conformal class of the initial metric\footnote{In two dimensions, since the curvature tensor satisfies the identity $R_{ijkl}=\frac{R}{2} (g_{il}g_{jk}-g_{ik}g_{jl})$ the RG-2 flow takes the form $\partial_t g_{ij}=-Rg_{ij} -\frac{\alpha}{4}R^2g_{ij}$; it immediately follows that the RG-2 flow preserves the conformal class  of the metric.} ,
and one can write the flow equation as a PDE for the conformal factor, $e^u$ (the metric on $M^2$ is chosen to be $e^u \tilde{g}$, with $\tilde{g}$ fixed). If, further (following Oliynyk \cite{O}), one writes the equation for the conformal factor in linearized form, setting $u = u_* +v$, one has 
\begin{equation}
\label{2dimlin}
\partial_t v = e^{-u_*}\left(1 + \frac{\alpha}{2} R_*\right) \tilde{\Delta} v + F_* v,
\end{equation}
where $R_*$ is the scalar curvature of the metric $e^{u_*}\tilde{g}$ about which the linearization is being done,  $\tilde{\Delta}$ is the Laplacian of the metric $\tilde{g}$, and $F_*$ is some function depending on $\tilde g$ and $u_*$. Clearly from (\ref{2dimlin}), one sees that if the evolving metric $e^{u_*}\tilde{g}$ is such that $(1 + \frac{\alpha}{2} R_*) >0$, then the flow equation is (at that moment) parabolic, while otherwise the flow is not.

Another aspect of RG-2 flow which has been studied is the stability of flat solutions under this flow. Using techniques (i.e., maximal regularity theory) similar to those used to prove the stability of flat solutions on the torus (in any dimension) under Ricci flow, Guenther and Oliynyk \cite{GO} have proven that these same solutions are stable under RG-2 flow. They also prove stability of constant negative curvature geometries under a flow related to RG-2 by the addition of terms which generate homothetic rescalings and diffeomorphisms of the geometries.

\section{RG-2 Flow for Constant Curvature Geometries}
\label{ConstCurv}

Geometrically, the simplest class of locally homogeneous geometries consists of  those with constant curvature; i.e., those for which the Riemann curvature tensor satisfies the condition 
\begin{equation*}
R_{ijkl} = K (g_{il}g_{jk}-g_{ik}g_{jl})
\end{equation*}
for some constant  $K$.  Letting $Rm^2_{ij} :=  R_{iklm} R_j^{klm}$ denote the quadratic curvature term in the RG-2 flow equation, one readily verifies that the constant curvature condition implies (for dimension $n$) 
\begin{align*}
Rm_{ij}^{2} &=2K^{2}(n-1)g_{ij}, \\
R_{ij} &=K(n-1)g_{ij},
\end{align*}
from which it follows that if $g(t)$ evolves via the RG-2 flow equation (\ref{RG-2eqn}) and if $g(0)=g_0$ has constant curvature, then $g(t)$ preserves its conformal class, and we may write $g(t)=\phi(t)g_0$. Since RG-2 flow preserves isometries, we may also presume that $\phi(t)$ is a spatial constant. Noting that $R_{ij}[\phi g]=R_{ij}[g]$ and $Rm^2_{ij}[\phi g]= \frac{1}{\phi}Rm^2_{ij}[g]$, we find that the evolution equation for $\phi$ corresponding to the RG-2 flow of constant curvature geometries is 
\begin{equation}
\label{phieqn}
\partial_t \phi (t)=-2K(n-1)-\frac{\alpha }{\phi (t)}K^{2}(n-1).
\end{equation}
               
In the Ricci flow case ($\alpha=0$), one easily integrates equation (\ref{phieqn}) to obtain (with $\phi(0)=1$)
\begin{equation*}
g(t) =(1-2K(n-1)t)g_0.
\end{equation*}

With $\alpha$ nonzero, equation (\ref{phieqn}) is more difficult to integrate; however, one does obtain the following implicit solution for $\phi(t)$:
\begin{equation}
\label{RGConstcurv}
\phi (t) =-2K(n-1)t+1 
+\frac{\alpha K}{2}\ln \left\vert \frac{2\phi (t)+\alpha K}{
2+\alpha K}\right\vert .
\end{equation}

In the case of positive curvature $K$, we see from the evolution equation (\ref{phieqn})  that for every value of $K$ and of $\alpha$, $\phi$ monotonically decreases, and we see from the implicit solution (\ref{RGConstcurv}) that $\phi(T)=0$ for 
\begin{equation*}
T=\frac{1}{2K(n-1)}+\frac{\alpha }{4(n-1)}\ln \left\vert \frac{\alpha
K}{2+\alpha K}\right\vert .
\end{equation*}
Hence for positive constant curvature, the RG-2 flow is qualitatively the same as the Ricci flow. We note that since (for $\alpha>0$)
\begin{equation*}
\frac{\alpha K}{2+\alpha K}<1,
\end{equation*}
in fact RG-2 flow shortens the time to reach the singularity.

For negative curvature $K$, the asymptotic behavior of RG-2 flow \emph{does} depend on the values of $K$ and $\alpha$. Indeed, inspecting equation (\ref{phieqn}) at $t=0$ with  $\phi(0)=1$, one finds that for a fixed value of $\alpha$,  those constant negative curvature geometries with $|K|<\frac{2}{\alpha}$ initially \emph{expand} under RG-2 flow (as with Ricci flow); but if $|K|>\frac{2}{\alpha}$, the RG-2 flow initially \emph{contracts}. Moreover, those geometries which initially expand continue to do so, and those which initially contract continue to do so as well.  Furthermore the geometries which expand are immortal, while those which contract collapse in finite time.  We note that for any choice of constant negative curvature $K$, there are choices of $\alpha$ for which the RG-2 flow behaves like Ricci flow, and others for which it behaves very differently.

Since all metrics on $\text{H}(3)$ are constant curvature metrics, the above calculations give a complete description of the RG-2 flow in this case. 

For $\R^3$ the geometries are all flat,  so all are clearly fixed points of the RG-2 flow. 

\section{RG-2 Flow for $\text{SO}(3)\times \R$, and $\text{H}(2) \times \R$.}
\label{DirProd}

While the homogeneous simple direct product geometries on $\text{SO}(3)\times \R$, and $\text{H}(2) \times \R$ do not have constant curvature, the behavior of their RG-2 flows follows immediately from the calculations done above in Section \ref{ConstCurv}. 
In the case of $\text{SO}(3)\times \R$, the metrics take the product form $$g=Dg_{\R} + E\gamma_{S^2},$$ where $D$ and $E$ are spatial constants, $g_{\R}$ is the metric on $\R$, and $\gamma_{S^2}$ is the round metric on the sphere. The calculations of Section \ref{ConstCurv} imply that under the RG-2 flow, the round sphere shrinks to a point. The metric is flat in the $\R$ direction, and therefore as for the Ricci flow (discussed in \cite{IJ}),  a curvature singularity forms in finite time. 

For $\text{H}(2) \times \R$, we also have a product metric of the form $$g=D g_{\R} + E\gamma_{\text{H}(2)},$$ where $\gamma_{\text{H}(2)}$ is the constant curvature metric on the hyperbolic plane, and $D$ and $E$ are spatial constants. Again it follows from the calculations in  the above section that if $|K|>\frac{2}{\alpha}$ the flow contracts in the $\text{H}(2)$ direction, developing a singularity in finite time, while if $|K|<\frac{2}{\alpha}$ then the flow expands for all time. 

\section{The RG-2 Flow Equations for $\text{SU}(2)$, $\text{Nil},$ $\text{Sol},$ and $\text{SL}(2,\R)$}
\label{LocHom}

In this section we focus on four families: those characterized by the (unimodular) transitive isometry groups $\text{SU}(2)$, $\text{Nil}$, $\text{Sol}$, and $\text{SL}(2,\R)$. As geometrically defined flows, both Ricci flow and RG-2 flow preserve isometries, and hence preserve these families. 

There are two different ways to study geometric flows on families of homogeneous geometries. In both approaches, one works with a frame field that is left-invariant under the group action. Since the groups being considered here are unimodular, one may choose the frame field so that the metric is diagonal. Such a choice is useful since, in terms of such a frame field basis, one verifies that both terms on the right  hand side of equation (\ref{RG-2eqn}) are diagonal; thus diagonality is preserved by the RG-2 flow. In the first of the two approaches, one fixes the chosen frame field $\{f_i\}_{i=1}^3$. Hence the frame field commutators $[f_i,f_j]={c}^k_{ij}f_k$ do not evolve in time, but the diagonal components of the metric--which we label $A, B,$ and $C$--do evolve. In the second approach, one allows the frame field, and therefore the commutators, to evolve, but one requires the frame field to be orthonormal; hence the metric coefficients do not evolve.  While both methods are useful\footnote{Both methods are also used in the study of the dynamics of spatially homogeneous cosmologies satisfying the Einstein gravitational field equations.}, here we use the first, which facilitates comparison with the Ricci flow results in \cite{IJ}.

For convenience of reference, we use the notation of \cite{CLN}. Thus we set $\lambda:={c}^1_{23}, \mu:={c}^2_{31}$, and $\nu:={c}^3_{12}$.  The frame field $\{e_i\}_{i=1}^3$ defined by $e_1 = A^{-1/2} f_1, e_2 = B^{-1/2}f_2$, and $e_3 = C^{-1/2} f_3$ is orthonormal. Calculating the curvatures for geometries specified by $\{ \lambda, \mu, \nu\}$ (which identify the isometry family) and by $\{ A, B, C\}$ (which specify the metric in that family), one obtains sectional curvatures (see e.g., section 7 of chapter 4 in \cite{CLN})
\begin{align*}
&K(e_{2}\wedge e_{3}) =\frac{(\mu B-\nu C)^{2}}{4ABC}+\lambda \frac{2\mu
B+2\nu C-3\lambda A}{4BC}, \\
&K(e_{3}\wedge e_{1}) =\frac{(\nu C-\lambda A)^{2}}{4ABC}+\mu \frac{2\nu
C+2\lambda A-3\mu B}{4AC}, \\
&K(e_{1}\wedge e_{2}) =\frac{(\lambda A-\mu B)^{2}}{4ABC}+\nu \frac{%
2\lambda A+2\mu B-3\nu C}{4AB}.
\intertext{In the original frame field $\{f_i\}$,  one computes from this}
&K_{12}=K(f_1\wedge f_2) = (AB)K(e_1 \wedge e_2), \\
&K_{23}=K(f_2\wedge f_3)=(BC)K(e_2 \wedge e_3), \\
&K_{31}=K(f_3\wedge f_1)=(AC)K(e_3\wedge e_1).
\end{align*}

We can easily calculate the Ricci curvatures using the orthonormal frame field, then converting to the $\{f_i\}$ frame field, and thereby obtaining
\begin{align*}
&R_{11} =Rc(f_1\wedge f_1)=\frac{(\lambda A)^{2}-(\mu B-\nu C)^{2}}{2BC}, \\
&R_{22} =Rc(f_2\wedge f_2)=\frac{(\mu B)^{2}-(\nu C-\lambda A)^{2}}{2AC}, \\
&R_{33}=Rc(f_3\wedge f_3) =\frac{(\nu C)^{2}-(\lambda A-\mu B)^{2}}{2AB}.
\intertext{ Similarly, we find}
&Rm^2_{11} = Rm^2(f_1\wedge f_1)=\frac{2}{AB^2} K^2_{12}+\frac{2}{AC^2}K_{31}^2, \\
&Rm^2_{22}=Rm^2(f_2\wedge f_2)=\frac{2}{A^2B}K_{12}^2+\frac{2}{C^2B}K_{23}^2,\\
&Rm^2_{33}=Rm^2(f_3\wedge f_3)=\frac{2}{A^2C}K_{31}^2+\frac{2}{B^2C}K_{23}^2.
\end{align*}

Substituting these calculations into equation (\ref{RG-2eqn}), we obtain an ODE system for the evolution of the metric coefficients $\{ A, B, C\}$ under RG-2 flow: 
\begin{align}
\label{generalODEsA}
\frac{d A}{d t} &=\frac{(\mu B-\nu C)^{2}-(\lambda A)^{2}}{BC}\\
\nonumber&\quad- \alpha A\left[\left(\frac{(\lambda A-\mu B)^{2}}{4ABC}+\nu \frac{%
2\lambda A+2\mu B-3\nu C}{4AB}\right)^2 +\left(\frac{(\nu C-\lambda A)^{2}}{4ABC}+\mu \frac{2\nu
C+2\lambda A-3\mu B}{4AC}\right)^2\right],\\
\label{generalODEsB}
\frac{d B}{d t} &= \frac{(\nu C-\lambda A)^{2}-(\mu B)^{2}}{AC} \\
\nonumber&\quad-\alpha B\left[\left(\frac{(\lambda A-\mu B)^{2}}{4ABC}+\nu \frac{%
2\lambda A+2\mu B-3\nu C}{4AB}\right)^2+\left(\frac{(\mu B-\nu C)^{2}}{4ABC}+\lambda \frac{2\mu
B+2\nu C-3\lambda A}{4BC}\right)^2\right],\\
\label{generalODEsC}
\frac{d C}{d t} &=\frac{(\lambda A-\mu B)^{2}-(\nu C)^{2}}{AB}\\
\nonumber&\quad-\alpha C\left[\left(\frac{(\nu C-\lambda A)^{2}}{4ABC}+\mu \frac{2\nu
C+2\lambda A-3\mu B}{4AC}\right)^2
+\left(\frac{(\mu B-\nu C)^{2}}{4ABC}+\lambda \frac{2\mu
B+2\nu C-3\lambda A}{4BC}\right)^2\right].
\end{align}

As noted above, our analysis here relies on imposing an LRS condition, which sets two of the three metric coefficients $\{A,B,C\}$ equal. Such a condition leads to a useful reduction of the analysis only if the evolution equations preserve that equality. Below, for each of the four isometry families we consider, we do find that there is an LRS equality which is preserved.

\section{Analysis of RG-2 Flow for Four Families of Locally Homogeneous Geometries}
\label{Analysis}

In this section, we consider four families of locally homogeneous geometries--$\text{SU}(2)$, $\text{Nil}$, $\text{Sol}$, and $\text{SL}(2,\R)$ --each of them identified by a particular choice of the constants $\{ \lambda, \mu, \nu\}$.  For each family, we first write out the evolution equations, we next impose an LRS condition and write out the resulting reduced ODE system, and finally we state and prove a theorem which characterizes the asymptotic behavior of the RG-2 flow for that family. 

As we mentioned above and as we show below, for $\text{SU}(2)$ we find that the asymptotic behavior of the RG-2 flow is essentially the same as that of the Ricci flow, for all initial geometries. For each of the other cases, there is a region in the phase plane in which the two flows behave in a qualitatively similar way, and a region in which the RG-2 flow and the Ricci flow behave  qualitatively very differently.

Before stating these results more precisely and proving them, we note some general features of the phase plane analysis which we use here for all four families of geometries. For each of these families, the evolution equations take the form of a system of two ODEs 
\begin{align}
\label{M}
\frac{dM}{dt}&=F(M,N)\\
\label{N}
\frac{dN}{dt}&=G(M,N)
\end{align} 
to be solved for $M(t)>0$ and $N(t)>0$, with $F(M,N)$ and $G(M,N)$ a pair of specified rational functions. Two special features of these equations (true for all four families) strongly restrict the allowed behavior of the solutions. First, the denominators of the rational functions $F(M,N)$  and $G(M,N)$ are simple monomials in the metric functions $M$ and $N$. Hence, so long as $M$ and $N$ are positive, the right hand sides of (\ref{M}) and (\ref{N}) are well-behaved, and consequently, the solutions continue (for all positive values of $t$) so long as $M$ and $N$ stay bounded and non-zero. Second, in all cases, we find that one or the other of these evolution equations--let us say, without loss of generality, the first--has a negative definite right hand side (for positive $M$ and $N$). This has the important consequence that there are no equilibrium solutions, and further that all bounded solution trajectories must approach one or the other (or both) of the axes. This property also allows us to replace the evolution parameter $t$ by $M$ (keeping the reversed direction of the flow in mind), and then work with the orbit flow equation 
\begin{equation}
\label{orbitflow}
\frac{dN}{dM}=\frac{F(M,N)}{G(M,N)},
\end{equation}
obtained from the above system. Keeping in mind the above-noted properties, we derive (below) the salient features of the orbits of the flow in the $(M>0,N>0)$ (quarter) phase plane for each family of geometries by studying the behavior of solutions of \eqref{orbitflow}, emphasizing their behavior as $M$ decreases (and $t$ increases).
To determine whether the solutions are immortal, we return to the ODE system (\ref{M})-(\ref{N}).

\subsection{$\text{SU}(2)$ geometries}

For the homogeneous geometries with $\text{SU}(2)$ symmetry, one has $\lambda = \mu = \nu = -2$, so then the RG-2  evolution equations (\ref{generalODEsA})-(\ref{generalODEsC}) take the form 
\begin{align*}
\frac{d A}{d t} &=\frac{4(C-B)^2-4A^2 }{BC}\\
\nonumber&\quad- \frac{\alpha}{AB^2}\left[\frac{(B-A)^2}{C}+2A+2B-3C\right]^2 -\frac{\alpha}{AC^2}\left[ \frac{(A-C)^2}{B}+ 2C+2A-3B\right]^2,\\
\frac{d B}{d t} &=\frac{4(A-C)^2-4B^2}{AC} \\
\nonumber&\quad- \frac{\alpha}{A^2B} \left[\frac{(B-A)^2}{C}+2A+2B-3C\right]^2-\frac{\alpha}{C^2B} \left[\frac{(C-B)^2}{A}+2B+2C-3A\right]^2, \\
\frac{d C}{d t} &=\frac{4(B-A)^2-4C^2}{AB}\\
\nonumber&\quad-\frac{\alpha}{A^2C}\left[\frac{(A-C)^2}{B}+ 2C+2A-3B\right]^2-\frac{\alpha}{B^2C}\left[\frac{(C-B)^2}{A}+2B+2C-3A\right]^2.
\end{align*}

One readily verifies in these equations that if one sets $A=B$ in the first two of them, then $\frac{d}{dt} A= \frac{d}{dt}B$; hence the LRS condition $A=B$ is preserved by the RG-2 flow. Similarly, for these geometries, LRS conditions $B=C$ and $A=C$ are also preserved by the RG-2 flow. We choose here (without loss of generality) to set $B=C$; consequently we work with the reduced system 
\begin{align}
\label{SU2LRS:A}
\frac{dA}{dt}&=-\frac{4A^2B^2+2\alpha A^3}{B^4},\\
\label{SU2LRS:B}
\frac{dB}{dt}&=\frac{4AB^2-8B^3-10\alpha A^2-16\alpha B^2+24\alpha AB}{B^3}.
\end{align}

It is immediate from \eqref{SU2LRS:A} that for all non-negative values of the coupling constant $\alpha$ (including $\alpha =0$, which corresponds to Ricci flow) and for all values of the evolving metric coefficients $A$ and $B$, the metric coefficient $A$ decreases monotonically in time. The behavior of $B$ (and equivalently $C$) is not so immediately apparent. 

Since our goal is to show that for both Ricci flow and RG-2 flow, in fact $B$ does eventually become a monotonically decreasing function which asymptotically approaches $A$, it is useful to calculate the evolution of the quantity $A-B$; we obtain
\begin{equation}
\label{SU2:A-B}
\frac{d(A-B)}{dt} = -\frac{4A^2}{B^2} - \frac{2\alpha A^3}{B^4} + 8- \frac{4A}{B} + 2\alpha\left[\frac{5}{B}\left(\frac{A}{B}-1\right)^2 + \left(\frac{3}{B} - \frac{2A}{B^2}\right)\right].
\end{equation}
Setting $A=B$ in \eqref{SU2:A-B}, we find that the right hand side vanishes\footnote{This vanishing does \emph{not} immediately follow from fact that, if we set $A=B$ in the original $SU(2)$ system above, we obtain 
$\frac{d}{dt} A= \frac{d}{dt}B$, since we are working now with the $(B=C)$ reduced LRS system \eqref{SU2LRS:A}-\eqref{SU2LRS:B}.}.
It follows that for both Ricci flow and RG-2 flow, full isotropy $A=B=C$ is preserved. From the point of view of our $(A,B)$-parametrized phase portrait, this tells us that the diagonal line $A=B$ consists of orbits of the flows. Noting that, for $A=B$, the equation (\ref{SU2LRS:A}) takes the form $\frac{dA}{dt}=-4-\frac{\alpha}{A}$, we see that all of these solutions approach the $A=0=B$ origin in finite time.

This feature of the $A=B$ diagonal line plays a crucial role in our phase portrait analysis of the RG-2 flow and Ricci flow for LRS $\text{SU}(2)$ geometries. Since the line $A=B$ corresponds to solution curves that partition the plane, orbits on a given side of the $A=B$ diagonal at any given time must stay on that side for all time, as a consequence of the  uniqueness of solutions. Hence, if we define $\mu:=B-A$, the phase portrait analyses for $\mu>0$ and for $\mu<0$ can be carried out completely independently. 

Before examining the RG-2 flow for the LRS $\text{SU}(2)$ geometries, we focus on the Ricci flow for these geometries. Starting with either $\mu>0$ or $\mu<0$, we show that along every flow line, this quantity 
approaches zero, signaling that $A$ and $B$ approach each other, and the geometries approach isotropy. 

Following our treatment of the Ricci flow for LRS $\text{SU}(2)$ geometries, we prove essentially the same results for the RG-2 flow for these geometries.

\subsubsection{Ricci Flow case} 
As noted above, for all values of $\alpha$ and for all of these geometries $(A>0$ and $B>0)$, one has $\frac{dA}{dt}<0$. Hence, for the purposes of phase portrait study, we can replace the parameter $t$ by $A$, and work with $B(A)$, or equivalently $\mu(A)$. In carrying out these studies, it is important to keep in mind that increasing $t$ corresponds to decreasing $A$. Therefore, asymptotic (future) Ricci flow behavior is studied by examining orbits with decreasing values of $A$.

For the Ricci flow case ($\alpha=0$), we calculate from equation \eqref{SU2:A-B} the following evolution equation
\begin{equation*}
\frac{d\mu}{dA} = \frac{\mu(3A+2\mu)}{A^2},
\end{equation*}
which has the explicit (general) solution 
\begin{equation}
\label{SU2RicSoln}
\mu(A) = \frac{-A^3}{A^2 - k},
\end{equation}
where $k$ is any constant. We see from \eqref{SU2RicSoln} that if for some value $A=A_0$ one has $\mu(A_0)>0$, then we must have $k>A_0^2$. It then follows that as $A$ decreases toward zero, $\mu(A)$ stays positive but decreases to zero, with $\lim_{A \rightarrow 0} \mu(A) =0.$

If, on the other hand, $\mu(A_0)<0$, then we must have $k<A_0^2$. Recalling the definition $\mu:=B-A$ and the requirement that both $B$ and $A$ be positive, we must have $-A_0 < \mu(A_0) < 0$, from which it follows that $k<0$ in expression \eqref{SU2RicSoln}. We then have 
\begin{equation*}
\mu(A)=-A\frac{A^2}{A^2+(-k)}>-A
\end{equation*} 
for all $A<A_0$. It follows in this case that as $A$ decreases toward zero, $\mu(A)$ stays negative but increases to zero, with (again) $\lim_{A \rightarrow 0} \mu(A) =0.$

Once it has been determined (as above) that the orbits of the Ricci flow on the $(A>0,B>0)$ plane (for LRS $\text{SU}(2)$ geometries) all proceed to the point $(0,0)$, it remains to show that the solutions of the system (\ref{SU2LRS:A})-(\ref{SU2LRS:B}) (with $\alpha = 0$) do indeed all approach this point (without any prior singularities halting the flow). This follows immediately as an application of the general statement made above: so long as $A$ and $B$ are positive and finite, the flow continues to $(0,0)$.

We may in fact show that the solutions all reach $(0,0)$ (and become singular) in finite time. The key to showing this is the Ricci flow equation 
\begin{equation}
\label{RFSUA}
\frac{dA}{dt}=-4\frac{A^2}{B^2}. 
\end{equation}
For those solutions initially (and therefore always) below the $A=B$ line, it follows from (\ref{RFSUA}) that $\frac{dA}{dt}<-4$, so $A\rightarrow 0$ in finite time. For those solutions above the $A=B$ line, it is useful to calculate 
\begin{equation*}
\frac{d}{dt}\bigg(\frac{A}{B}\bigg) = 8\frac{A}{B^2}\bigg(1- \frac{A}{B}\bigg); 
\end{equation*}
combining this with the presumption that $\frac{A}{B}<1$, we have $\frac{d}{dt}(\frac{A}{B}) > 0$. Hence $\frac{dA}{dt} < -4 (\frac{A_0}{B_0})^2$, which again implies that a singularity is reached, with $A\rightarrow 0$, in finite time.

\subsubsection{RG-2 case}
\label{RG2SU(2)}

In this case we have
$$\frac{d\mu}{dt} = -4\mu\bigg(\frac{3A+2\mu}{(A+\mu)^2}\bigg) - 2\alpha \mu \frac{(5A^2 + 12A\mu + 8\mu^2)}{(A+\mu)^4},$$
and we calculate

\begin{align*}
\frac{d\mu}{dA} &= \frac{2\mu(A+\mu)^2(3A+2\mu)+\mu \alpha(5A^2+12A\mu+8\mu^2)}{2A^2(A+\mu)^2+\alpha A^3}\\
 &=\frac{6\mu A^3+16A^2\mu^2+5 \alpha A^2\mu+14A\mu^3+12 \alpha A\mu^2+8 \alpha \mu^3+4\mu^4}{2A^4+4A^3\mu+2A^2\mu^2+A^3}.
\end{align*}

We see immediately from this equation that if $\mu>0$, then $\frac{d\mu}{dA} >0$. Hence, in this case, as $A$ decreases towards zero, $\mu$ decreases as well. It follows from this, together with our determination above that all bounded trajectories must approach the axes, that any solution trajectory with $\mu>0$ must approach a point on the $A=0$ axis, with a finite value of $B$. We seek (below) to show that in fact all of these trajectories approach the origin $(0,0)$. 

If, on the other hand, a solution trajectory has $\mu<0$, then we infer from the discussion above that the trajectory must approach the $B=0$ axis. In this case as well, we show that the trajectories all approach the origin.

We focus first on the trajectories below the $A=B$ line: those with $\mu<0$. We presume, for the sake of contradiction, that there is a trajectory which approaches  the $B=0$ axis at some finite $A=A_1>0$. To show that this presumption leads to a contradiction, it is not useful to examine the system (\ref{SU2LRS:A})-(\ref{SU2LRS:B}) directly, since while the right hand sides of both equations blow up as $(A,B) \rightarrow (A_1,0)$, such behavior is in principle consistent. Rather, we note that (since $\frac{dA}{dt}<0)$ the trajectory of a solution of (\ref{SU2LRS:A})-(\ref{SU2LRS:B}) must everywhere satisfy the \emph{trajectory ODE} 
\begin{eqnarray}
\label{SU2dB/dA}
\frac{dB}{dA} = \frac{4B^4 -2AB^3 +\alpha(5A^2B-12AB^2+8B^3)}{2A^2B^2 +\alpha A^3}. 
\end{eqnarray}
Further, if there were a solution trajectory which approached $(A_1,0)$, since the right hand side of (\ref{SU2dB/dA}) is well-behaved at and near $(A_1,0)$, then it would follow from standard ODE theory that indeed this trajectory (everywhere along its path satisfying \eqref{SU2dB/dA}) must intersect and pass through $(A_1,0)$. 

The contradiction arises because ODE theory guarantees that the solutions of (\ref{SU2dB/dA}) in a neighborhood of the point $(A_1,0)$ are unique, and we readily verify that $B(A)=0$ is a solution. Hence there are no solutions intersecting the $B=0$ axis uniquely at $(A_1,0)$. This implies that there are no solutions of the system (\ref{SU2LRS:A})-(\ref{SU2LRS:B}) which approach $(A_1,0)$.
We note that this argument breaks down at the origin, since the right hand side of equation (\ref{SU2dB/dA}) is \emph{not} well-behaved at the origin. 

We can make a similar argument for the portion of the phase plane that is above the line $A=B$, with $\mu>0$.  Again, the issue is to show that the solutions in this region, which we have determined are bounded with decreasing $\mu$, do not asymptotically approach the $A=0$ axis, except at the origin. To argue this, we first note from equation (\ref{SU2LRS:B}) that for $B>A$,  $B(t)$ monotonically decreases. Hence in a neighborhood of the $A=0$ axis, the trajectory of a solution of (\ref{SU2LRS:A})-(\ref{SU2LRS:B}) may be studied as a function $A(B)$,  satisfying the trajectory ODE
\begin{eqnarray}
\label{SU2dA/dB}
\frac{dA}{dB}=\frac{A^2(2B^2+\alpha A)}{B(4B^3+8\alpha B^2-2AB^2-12\alpha AB+5\alpha A^2)}.
\end{eqnarray}
We now assume that 
there is a solution trajectory which approaches $(B_1,0)$, with $B_1>0$. Then as argued above, since the right hand side of (\ref{SU2dA/dB}) is well-behaved in a neighborhood  $(B_1,0)$, such a solution passes through this point, and is the only one which does so. However, 
$A(B)=0$ is also a solution of this ODE which passes through $(B_1,0)$, leading to a contradiction. We have thus determined that all of the RG-2 solutions for locally rotationally symmetric $\text{SU}(2)$ geometries approach the origin. 

Finally, we argue that these solutions reach the origin in finite time. As for the Ricci flow solutions, we rely on the equation for the metric coefficient $A$--equation (\ref{SU2LRS:A}), for the general RG-2 case. If $A>B$, then it follows easily from (\ref{SU2LRS:A}) that $\frac{dA}{dt}<-4$, so $A\rightarrow 0$ in finite time. If $A<B$, then since 
\begin{equation*}
\frac{d}{dt}\bigg(\frac{A}{B}\bigg) = 8\frac{A}{B^2}\bigg(1- \frac{A}{B}\bigg) +8 \alpha \frac{A}{B^3}(1-\frac{A}{B})(2-\frac{A}{B}) 
\end{equation*}
we have $\frac{A}{B}$ increasing. Therefore $\frac{dA}{dt} < -4 (\frac{A_0}{B_0})^2$, and we have $A$ collapsing to zero in finite time. 

Combining these results with those established above, we have proven the following:

\begin{theorem} [RG-2 Flow for Locally Rotationally Symmetric $\text{SU}(2)$ Geometries]
Every solution of the system (\ref{SU2LRS:A})-(\ref{SU2LRS:B}) becomes singular in finite time, with ($A(t), B(t))$ approaching $(0,0)$ at the singularity.
\end{theorem}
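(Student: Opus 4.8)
The plan is to assemble the observations already made in Section~\ref{Analysis} into a two-region phase-plane argument. Since the right side of \eqref{SU2:A-B} vanishes when $A=B$, the diagonal $A=B$ consists of flow orbits, so by uniqueness of solutions no trajectory can cross it; I would therefore analyze the region $\mu:=B-A>0$ and the region $\mu<0$ entirely independently. On the diagonal itself, \eqref{SU2LRS:A} reduces to $dA/dt=-4-\alpha/A$, which is negative and bounded away from zero, so those orbits reach $(0,0)$ in finite time, disposing of the boundary case. For the bulk of the argument, note first that \eqref{SU2LRS:A} gives $dA/dt<0$ for all positive $A,B$, so $t$ may be traded for $A$ as evolution parameter, with increasing $t$ corresponding to decreasing $A$.

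In the region $\mu>0$, the computed expression for $d\mu/dA$ has numerator and denominator with all-positive coefficients when $A,\mu>0$, hence $d\mu/dA>0$, so $\mu$ decreases monotonically as $A$ decreases. Combined with the general principle of Section~\ref{Analysis} that every bounded trajectory must limit onto an axis, such a trajectory must approach a point $(0,B_\infty)$ with $B_\infty<\infty$. To upgrade this to $B_\infty=0$, I would use the trajectory ODE \eqref{SU2dA/dB} for $A(B)$ — valid near $A=0$ because \eqref{SU2LRS:B} shows $B$ is monotone there — together with the observation that $A(B)\equiv 0$ solves \eqref{SU2dA/dB} and that the right side of \eqref{SU2dA/dB} is well-behaved at every point $(B_1,0)$ with $B_1>0$; uniqueness then forbids any other trajectory from reaching such a point, forcing $B_\infty=0$. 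The region $\mu<0$ is symmetric: bounded trajectories must approach the $B=0$ axis, and I would invoke the trajectory ODE \eqref{SU2dB/dA} for $B(A)$, noting $B(A)\equiv 0$ solves it and its right side is well-behaved at every $(A_1,0)$ with $A_1>0$, so uniqueness again excludes all axis limit points except the origin. In both regions, once a trajectory is known to converge to $(0,0)$, the general existence criterion ($A,B$ positive and finite implies the flow continues) shows the trajectory is defined up to that limit.

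Finally, for the finite-time conclusion I would return to \eqref{SU2LRS:A}. When $A\ge B$ it yields $dA/dt<-4$ directly; when $A<B$, computing $\frac{d}{dt}(A/B)$ shows it is positive, so $A/B$ stays bounded below by $A_0/B_0>0$, whence $dA/dt<-4(A_0/B_0)^2$. Either way $A$ hits zero in finite time, so $\tmax<\infty$ and $(A(t),B(t))\to(0,0)$ as $t\to\tmax$. The step I expect to be the genuine obstacle is the behavior \emph{at} the origin: the uniqueness arguments deliberately fail there, since the right sides of \eqref{SU2dB/dA} and \eqref{SU2dA/dB} are singular at $(0,0)$, so the exclusion-of-axis-points argument must be restricted to $A_1>0$ (respectively $B_1>0$), and it is only the combination of boundedness with the monotone convergence of $\mu$ (and of $A$) that pins the limit down to be exactly the origin rather than merely some point on an axis.
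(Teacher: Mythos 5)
Your proposal is correct and follows essentially the same route as the paper: the same reduction to the parameter $A$, the same monotonicity of $\mu$ on each side of the invariant diagonal, the same uniqueness argument via the trajectory ODEs \eqref{SU2dB/dA} and \eqref{SU2dA/dB} to exclude axis limit points other than the origin, and the same finite-time estimate using $\frac{d}{dt}(A/B)>0$. No substantive differences to report.
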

It follows from this theorem that for these geometries, RG-2 flow is qualitatively the same as Ricci flow, with all solutions having shrinker asymptotics, and with all solutions approaching isotropy.

\subsection{$\text{Nil}$ geometries}
\label{Nilsubsection}

For the homogeneous $\text{Nil}$ geometries, we have $\lambda=-2,$ and $\mu=\nu=0$; hence  the ODEs (\ref{generalODEsA}), (\ref{generalODEsB}), and  (\ref{generalODEsC}) take the form

\begin{align}
\frac{dA}{dt}&=\frac{-4A^2}{BC} - \frac{\alpha}{AB^2} (\frac{A^2}{C})^2 - \frac{\alpha}{AC^2}(\frac{A^2}{B})^2\label{dAdtnil0},\\
\frac{dB}{dt}&=\frac{4A}{C}-10\alpha\frac{A^2}{BC^2} \label{dBdtnil0},\\
\frac{dC}{dt}&=\frac{4A}{B}-10\alpha\frac{A^2}{B^2C}\label{dCdtnil0}.
\end{align}

As in the case of $SU(2),$ one easily verifies that if one sets $B=C$ in the first two equations, then $\frac{d}{dt} B= \frac{d}{dt}C$; it follows that  the LRS condition $B=C$ is preserved by the RG-2 flow. We now set $B=C$ and work with the reduced system 
\begin{align}
\frac{dA}{dt}&=-\frac{4A^2}{B^2}-2\alpha\frac{A^3}{B^4}\label{dAdtnil}\\
\frac{dB}{dt}&=\frac{4A}{B}-10\alpha\frac{A^2}{B^3}.\label{dBdtnil}
\end{align}
We note that for the $\text{Nil}$ geometries (unlike the $\text{SU}(2)$ geometries), we cannot choose $A=C$ or $A=B$ as LRS conditions; $B=C$ is the only one that works. 

As for the $\text{SU}(2)$ geometries, regardless of the (non-negative) value of $\alpha$,  the right hand side of the evolution equation (\ref{dAdtnil}) is negative definite. Hence we can carry out much of our study of the RG-2 flow for $\text{Nil}$ geometries working with trajectory functions $B(A)$, which satisfy the trajectory equation
\begin{equation}
\label{mainnil}
\frac{dB}{dA}=\frac{B(5\alpha A-2B^2)}{A(2B^2+\alpha A)}.
\end{equation}
We start by considering the Ricci flow ($\alpha=0$) case.

\subsubsection{Ricci Flow Case}

For $\alpha=0$, the system of ODEs takes the simple form
\begin{align*}
\frac{dA}{dt}&=-\frac{4A^2}{B^2}\\
\frac{dB}{dt}&=\frac{4A}{B},
\end{align*}
and we readily verify that for any initial data $(A_0, B_0)$, this system has the explicit solution
\begin{align*}
A(t) &= \frac{k_1}{12}(k_1t+k_2)^{-1/3} \\
B(t)&=(k_1t+k_2)^{1/3},
\end{align*}
where $k_1 := \frac{(12A_0)^2}{B_0}$ and $k_2 = (\frac{B_0}{12A_0})^3.$ 
One can see immediately that $A(t) \rightarrow 0$ and $B(t) \rightarrow \infty$ as $t \rightarrow \tmax=\infty$. Recalling that we have presumed that
 $B=C$, we see that all of these LRS $\text{Nil}$ solutions are immortal, and all have pancake asymptotics.

In fact, one can argue that \emph{all} Ricci flow solutions for $\text{Nil}$ (LRS or not) have this same behavior. To see this, we first note that the general $\text{Nil}$ ODE system (\ref{dAdtnil0})-(\ref{dCdtnil0}) implies that   $\frac{d}{dt}(\frac{B}{C}) = 0$. Consequently, for any solution with initial data $(A_0,B_0,C_0)$, we have $C(t)=\frac{C_0}{B_0} B(t)$, which allows the general system (\ref{dAdtnil0})-(\ref{dCdtnil0}) to be essentially reduced to the LRS system 
(\ref{dAdtnil})-(\ref{dBdtnil}). The LRS Ricci flow behavior thus holds for the general Ricci flow behavior for $\text{Nil}$ geometries.

\subsubsection{RG-2 Case}
To analyze the behavior of the RG-2 flow, we start by seeking explicit solutions of the trajectory equation (\ref{mainnil}). Motivated by the form of the right hand side of (\ref{mainnil}), we find that 
\begin{equation*}
B=\sqrt{\frac{3 \alpha }{2}}A^{\frac{1}{2}}
\end{equation*} 
is indeed a solution. Substituting this relation into the evolution equation (\ref{dAdtnil}), we obtain $\frac{dA}{dt}=-\frac {32}{9\alpha} A$. It follows that the solutions of the system (\ref{dAdtnil})-(\ref{dBdtnil}) lying on this parabolic orbit (which we label 
$\pi_{\text{Nil}}$  are immortal, decaying exponentially to the origin $(0,0)$.

The trajectory $\pi_{\text{Nil}}$ partitions the phase plane, and as a consequence of the well-posedness of the ODE initial value problem associated to (\ref{mainnil}) for positive $A$ and $B$, it cannot be crossed by any other trajectory. As we see below, those RG-2 flow solutions lying above $\pi_{\text{Nil}}$ (with $B_0>\sqrt{\frac{3 \alpha }{2}}A_0^{\frac{1}{2}}$) behave much like the Ricci flow solutions, while those lying below $\pi_{\text{Nil}}$ (with $B_0<\sqrt{\frac{3 \alpha }{2}}A_0^{\frac{1}{2}}$) behave very differently.

The LRS $\text{Nil}$ solutions below  $\pi_{\text{Nil}}$ in fact behave to an extent like the LRS $\text{SU}(2)$ solutions below the $A=B$ line, and the arguments  to show this are  similar: We first note that any solution $B=\phi(A)$ of the trajectory equation (\ref{mainnil}) which lies below  $\pi_{\text{Nil}}$ satisfies the condition $5\alpha A-2[\phi(A)]^2>0$, from which it follows that $\frac{d\phi}{dA}>0.$ Thus $B(t)$, along with $A(t)$, monotonically decreases along a solution with a trajectory below $\pi_{\text{Nil}}$. Earlier considerations guarantee that these solutions continue so long as $B$ is positive, and the regularity of the right hand side of (\ref{mainnil}) for $B>0$ implies (as argued for the $\text{SU}(2)$ geometries) that they approach the origin rather than a point on the $B=0$ axis. Hence, geometrically, these solutions all exhibit shrinker asymptotics.

We would like to show that these solutions (below $\pi_{\text{Nil}}$) become singular in finite time. To do this, we work with (\ref{dAdtnil}), the evolution equation for $A(t)$, which for convenience we write in the form $\frac{dA}{dt}=\xi$, with $\xi:=-\frac{4A^2}{B^2} - 2\alpha \frac{A^3}{B^4}$. Noting that $\xi$ is negative for all solutions, we see that if we can show that $\xi$ is a decreasing  quantity along any solution below $\pi_{\text{Nil}}$ then it follows that, for such a solution, $\frac{dA}{dt}\le -\xi_0$, where $\xi_0$ is calculated from initial data. Finite time singularities for these solutions would then follow. Calculating the time derivative of $\xi$, we obtain
\begin{equation}
\label{dxidt}
\frac{d\xi}{dt}=\frac{8A^3}{B^8}(8B^4 - \alpha B^2 A - 34 \alpha^2 A^2).
\end{equation}
Noting that the only positive zeroes for the right hand side of (\ref{dxidt}) are given by $A=\frac{8}{17\alpha} B^2$, we readily determine that indeed, for solutions below $\pi_{\text{Nil}}$ (i.e., those with $\frac{A}{B^2} >\frac{2}{3\alpha}$), $\frac{d\xi}{dt}$ is negative. It follows that $(A,B)$ reaches $(0,0)$ in finite time

We proceed now to consider those solutions \emph{above} $\pi_{\text{Nil}}$. The partition of the phase plane requires that these solutions (all of which have decreasing $A(t)$) either (i) approach the origin; (ii) approach the $A=0$ axis at a finite value of $B$, say $B_1$; (iii) approach the $A=0$ axis with $B \rightarrow \infty$; or (iv) approach $B \rightarrow \infty$ at a finite value of $A$, say $A_1$. 

To rule out the first possibility, we consider a trajectory $B=\phi(A)$ which satisfies (\ref{mainnil}) and lies above $\pi_{\text{Nil}}$ and compare it to the trajectory$\pi_{\text{Nil}}$ itself, which satisfies $B=\pi_{\text{Nil}}(A) =\sqrt{\frac{3 \alpha }{2}}A^{\frac{1}{2}}$.
Writing the right hand side of (\ref{mainnil}) abstractly as a function  $f:(0,\infty)\times(0,\infty)\to\mathbb{R}$ which takes the form 
\begin{equation*}
f(x,y)=\frac{y(5\alpha x-2y^2)}{x(\alpha x+2y^2)},
\end{equation*}
we calculate 
\begin{equation*}
\frac{\partial f}{\partial y}(x,y)=\frac{5\alpha^2x^2-16\alpha xy^2-4y^4}{x(\alpha x+2y^2)^2}
\end{equation*}
and determine that if $\frac{x}{y^2}>\frac{3 \alpha}{2}$, then $\frac{\partial f}{\partial y}(x,y)<0$. This implies that $\frac{d\phi}{dA}(A)<\frac{d\pi_{\text{Nil}}}{dA}(A)$ for all $A$ in the domain of $\phi$, from which it follows that $\lim_{A\rightarrow 0} \phi(A)$ cannot equal zero. 

To rule out possibility (iv), we suppose for the purpose of contradiction that there exists $A_1>0$ such that $\phi(A)$ approaches $\infty$ as $A \rightarrow A_1$. To work  effectively with infinite values of $B$, it is useful to  define $\wb:=e^{-1/B}$ and $\wphi:=e^{-1/\phi}$, for which the corresponding trajectory ODE is 
\begin{equation}
\label{dBtilde/dAnil}
\frac{d\wb}{dA}=\frac{\wb\ln \wb(2-5\alpha A(\ln \wb)^2)}{A(2+\alpha A(\ln \wb)^2)}.
\end{equation}
In terms of these variables, we have $\wphi(A)\to 1$ as $A\rightarrow A_1$. Observing that the trajectory ODE (\ref{dBtilde/dAnil}) is well behaved for all positive values of $A$ and $\wb$, we see that the solution $\wphi(A)$ can be extended to a neighborhood of $A_1$. But this leads to a contradiction, since $\hat \phi(A)=1$ is also a solution of  (\ref{dBtilde/dAnil}) which  passes through the point $(A_1,1)$, and is not equal to $\wphi(A).$

With possibility (iv) ruled out, we have established that for all solutions, $A(t)$ approaches zero. We now argue that in all cases (above $\pi_{\text{Nil}}$), $B(t)$ approaches infinity. We first note, from (\ref{mainnil}), that the parabola  $B=\sqrt{\frac{5 \alpha }{2}}A^{\frac{1}{2}}$ partitions the phase plane into a region (above this parabola, and along the $A=0$ axis) in which $\frac{dB}{dA}$ is negative, and a region in which this quantity is negative. Both regions intersect the region of solutions we are now considering--those above $\pi_{\text{Nil}}$. In view of the form of these regions, and since we know (with (i) ruled out) that these solutions do not approach the origin, we see that in all cases, for $A$ sufficiently close to 0, we must have $B=\phi(A)$ satisfying  $\frac{dB}{dA}<0.$ Thus (recalling that $A(t)$ monotonically decreases) we have $B$ monotonically increasing with time, and increasing as $A$ approaches zero. It therefore must reach a limit, finite or infinite.

To show that in fact, this limit for $B$ is infinite, we presume for the purpose of contradiction that there exists $B_1<0$ such that $\lim_{A\rightarrow 0} \phi (A) =B_1.$ In view of the monotonicity of $\phi(A)$ near $A=0$, we may invert $\phi$ and consider $\phi^{-1}(B)$ as a solution of 
\begin{equation}
\label{dadb}
\frac{dA}{dB}=\frac{A(2B^2+\alpha A)}{B(5\alpha A-2B^2)}.
\end{equation}
Noting the regularity of the right hand side of this equation in the neighborhood of the point $(0,B_1)$, we use  the now familiar argument based on the well-posedness of the ODE initial value problem for (\ref{dadb}) to reach a contradiction. It follows that $\lim_{A\rightarrow 0} \phi (A) =\infty$ for any solution above $\pi_{\text{Nil}}$. Thus these solutions all have pancake asymptotics, like the Ricci flow solutions. 

To further show that, as for the Ricci flow solutions, these RG-2 solutions with pancake asymptotics are immortal, we again work with the evolution equation for $B(t)$. If it were the case that $B(t)$ approaches infinity in finite time $\tmax $, then it would have to be true that for $t$ sufficiently close to $\tmax $, we have $\frac{d^2 B}{dt^2}$ positive. However, $\frac{d^2 B}{dt^2}$ along a solution is given by the right hand side of (\ref{dxidt}), and we readily determine that for $B$ large and $A$ small, this is always negative. Immortality follows. 

Combining all of these results, we have proven the following:
\begin{theorem} [RG-2 Flow for Locally Rotationally Symmetric $\text{Nil}$ Geometries]
Let $(A(t), B(t))$ be a solution of \eqref{dAdtnil} and \eqref{dBdtnil}. 
\begin{enumerate} 
\item If $\alpha\geq\frac{2}{3}A_0^{-1}B_0^2$ then the solution becomes singular in finite time, with $(A(t), B(t))$  approaching $(0,0)$ at the singularity.
\item If $\alpha<\frac{2}{3}A_0^{-1}B_0^2$ then the solution is immortal, with  $\lim_{t \rightarrow \infty} (A(t),B(t))=(0,\infty)$.
\end{enumerate}
\label{propnil}
\end{theorem}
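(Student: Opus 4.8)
The plan is to read off the two cases from the position of the initial datum $(A_0,B_0)$ relative to the separatrix $\pi_{\text{Nil}}=\{B=\sqrt{3\alpha/2}\,A^{1/2}\}$, noting that $\alpha\ge\tfrac23 A_0^{-1}B_0^2$ is precisely the condition $B_0\le\sqrt{3\alpha/2}\,A_0^{1/2}$ (on or below $\pi_{\text{Nil}}$), while $\alpha<\tfrac23 A_0^{-1}B_0^2$ says $(A_0,B_0)$ lies strictly above it. Since $dA/dt<0$ everywhere on the open quadrant by (\ref{dAdtnil}), every solution is, after reparametrization by $A$, a trajectory $B=\phi(A)$ solving (\ref{mainnil}); well-posedness of (\ref{mainnil}) for positive $A$ and $B$ means $\pi_{\text{Nil}}$ cannot be crossed, so the three regions (below, on, above) are invariant and can be handled independently. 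Most of the needed facts are already in hand from the preceding discussion; the proof is their assembly.

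For the case $\alpha\ge\tfrac23 A_0^{-1}B_0^2$ I would argue as follows. If $(A_0,B_0)$ lies strictly below $\pi_{\text{Nil}}$, then $5\alpha A-2B^2>0$ along the orbit, so by (\ref{mainnil}) $\phi$ is increasing and hence $B(t)$, like $A(t)$, decreases monotonically; the uniqueness-against-$B\equiv0$ argument used for LRS $\text{SU}(2)$ (the right side of (\ref{mainnil}) being regular at each $(A_1,0)$ with $A_1>0$) then forces $(A,B)\to(0,0)$ rather than approach to a point of the $B=0$ axis. Finiteness of $\tmax$ comes from setting $\xi:=dA/dt$ and using (\ref{dxidt}): below $\pi_{\text{Nil}}$ one has $A/B^2>\tfrac2{3\alpha}>\tfrac8{17\alpha}$, so $d\xi/dt<0$, whence $dA/dt\le\xi_0<0$ for all $t$ and the flow must terminate with $A\to0$ at finite $\tmax$. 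On the boundary $\alpha=\tfrac23 A_0^{-1}B_0^2$ the orbit is $\pi_{\text{Nil}}$ itself, on which (\ref{dAdtnil}) reduces to $dA/dt=-\tfrac{32}{9\alpha}A$, giving exponential decay of $(A,B)$ to $(0,0)$, so this edge case converges to the origin as well.

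For the case $\alpha<\tfrac23 A_0^{-1}B_0^2$ I would proceed through the four a priori possibilities for the limiting behavior of an orbit with decreasing $A$. Possibility (i), approach to the origin, is excluded by the comparison estimate $\partial f/\partial y<0$ for $x/y^2>3\alpha/2$, which gives $d\phi/dA<d\pi_{\text{Nil}}/dA$ and hence $\phi-\pi_{\text{Nil}}$ bounded below by a positive constant as $A\searrow0$; possibility (iv), escape to $B=\infty$ at finite $A_1$, is excluded by compactifying via $\wb:=e^{-1/B}$, whose trajectory ODE (\ref{dBtilde/dAnil}) is regular for all positive $A,\wb$, so an orbit with $\wphi\to1$ would extend past $A_1$, contradicting uniqueness against $\wb\equiv1$. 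Thus $A(t)\to0$. Next, comparing with the parabola $B=\sqrt{5\alpha/2}\,A^{1/2}$ (on which $dB/dA=0$) shows $dB/dA<0$ for $A$ near $0$, so $B$ is eventually increasing as $A$ decreases and has a limit; a finite limit $B_1$ is ruled out by the same uniqueness argument applied to the inverted ODE (\ref{dadb}) near $(0,B_1)$, so $B(t)\to\infty$ — pancake asymptotics. Immortality then follows because $d^2B/dt^2$ equals the right side of (\ref{dxidt}), which is negative for $B$ large and $A$ small, so $B$ cannot blow up in finite time. Finally, since (\ref{dAdtnil0})–(\ref{dCdtnil0}) give $\tfrac{d}{dt}(B/C)=0$, this two-variable result upgrades verbatim to the full three-dimensional $\text{Nil}$ system.

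I expect the main obstacle to be the second case — in particular, excluding possibilities (ii)–(iv), where the orbit either runs to $B=\infty$ at a finite $A$ or limits to a finite point of the $A=0$ axis. Direct inspection of the ODE system (\ref{dAdtnil})–(\ref{dBdtnil}) is inconclusive there, since both right-hand sides genuinely blow up, so one is driven to the $\wb=e^{-1/B}$ compactification and the repeated "uniqueness against a trivial solution" device; the accompanying sign analyses for $d\xi/dt$ and $d^2B/dt^2$ are routine but need care. As an independent sanity check one can observe that (\ref{mainnil}) is separable under $w=B^2/A$, with first integral $A^9(3\alpha-2w)^4=\text{const}\cdot w$; reading off $A\to0$ asymptotics recovers the dichotomy — $w\to0$ hence $B\to0$ below $\pi_{\text{Nil}}$, versus $A^2B^2\to\text{const}$ hence $B\to\infty$ above it — although the behavior at the axes and at $B=\infty$ is still cleanest via the phase-plane arguments above.
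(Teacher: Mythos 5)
Your proposal is correct and follows essentially the same route as the paper: the separatrix $\pi_{\text{Nil}}$ with $dA/dt=-\tfrac{32}{9\alpha}A$ on it, monotonicity plus the uniqueness-against-the-trivial-solution argument below it, the four-case elimination (comparison via $\partial f/\partial y$, the $\wb=e^{-1/B}$ compactification, and the inverted ODE \eqref{dadb}) above it, and the $d\xi/dt$ and $d^2B/dt^2$ sign analyses for finite-time extinction and immortality. Your closing observation that \eqref{mainnil} separates under $w=B^2/A$ with first integral $A^9(3\alpha-2w)^4=\mathrm{const}\cdot w$ is correct and gives a nice independent confirmation of the dichotomy (and of the asymptotic law $A^2B^2\to\mathrm{const}$ above the separatrix), though, as you note, it does not by itself settle the behavior at the axes or the lifespan questions, for which the phase-plane arguments are still needed.
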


Just as for Ricci flow for $\text{Nil}$ geometries, we find that the results we have obtained for the RG-2 flow of LRS $\text{Nil}$ geometries holds for $\text{Nil}$ geometries without the LRS condition being imposed. The argument for this is the same as for Ricci flow: Based on equations (\ref{dBdtnil0}) and (\ref{dCdtnil0}), we easily verify that  $\frac{d}{dt}(\frac{B}{C}) = 0$. Thus, for any solution with initial data $(A_0,B_0,C_0)$, we have $C(t)=\frac{C_0}{B_0} B(t)$. If we set $\kappa:=\frac{B_0}{C_0}$, then the system \eqref{dAdtnil0}-\eqref{dCdtnil0} reduces to 
\begin{align*}
\frac{dA}{d\widetilde{t}}&=-\frac{4A^2}{B^2}-\frac{2\alpha\kappa A^3}{B^4},\\
\frac{dB}{d\widetilde{t}}&=\frac{4A}{B}-\frac{10\alpha\kappa A^2}{B^3},
\end{align*}
where we have defined $\widetilde{t} := \kappa t.$
Comparing these equations with \eqref{dAdtnil}-\eqref{dBdtnil}, we see that it 
follows that the analysis done above for the LRS solutions applies to all solutions. We obtain the following corollary:
\begin{cor}
\label{Nil3d}
Let $(A(t), B(t),C(t))$ be a solution of \eqref{dAdtnil0}-\eqref{dCdtnil0}.
\begin{enumerate} 
\item If $\alpha\geq\displaystyle\frac{2B_0C_0}{3A_0}$ then the solution becomes singular in finite time, with $(A(t), B(t),C(t))$  approaching $(0,0,0)$ at the singularity.
\item  If $\alpha<\displaystyle\frac{2B_0C_0}{3A_0}$, then the solution is immortal, with  $\lim_{t \rightarrow \infty} (A(t),B(t),C(t))=(0,\infty,\infty)$.
\end{enumerate}
\end{cor}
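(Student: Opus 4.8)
The plan is to reduce the full three-dimensional $\text{Nil}$ system \eqref{dAdtnil0}--\eqref{dCdtnil0} to the two-dimensional LRS system \eqref{dAdtnil}--\eqref{dBdtnil}, whose flow is completely described by Theorem \ref{propnil}, by exploiting the conservation of the ratio $B/C$. Concretely, I would first divide \eqref{dBdtnil0} by $B$ and \eqref{dCdtnil0} by $C$: both right-hand sides become $\tfrac{4A}{BC}-\tfrac{10\alpha A^2}{B^2C^2}$, so $\tfrac{d}{dt}\log(B/C)=0$ and hence $C(t)=\tfrac{C_0}{B_0}B(t)$ on the maximal interval of existence. Writing $\kappa:=B_0/C_0>0$, so that $C=\kappa^{-1}B$, and substituting into \eqref{dAdtnil0} and \eqref{dBdtnil0}, a short computation shows that after the positive linear time change $\widetilde{t}:=\kappa t$ the pair $(A,B)$ satisfies precisely the system \eqref{dAdtnil}--\eqref{dBdtnil} with the coupling constant $\alpha$ replaced by $\alpha\kappa$; equation \eqref{dCdtnil0} is then automatically consistent by the conservation law.

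Next I would apply Theorem \ref{propnil} to this rescaled system, with effective coupling $\alpha\kappa$. Its dichotomy is governed by the sign of $\alpha\kappa-\tfrac{2}{3}A_0^{-1}B_0^2$, and since $\kappa=B_0/C_0$ this has the same sign as $\alpha-\tfrac{2B_0C_0}{3A_0}$, which is exactly the threshold appearing in the corollary. In the first case, $\alpha\ge\tfrac{2B_0C_0}{3A_0}$, Theorem \ref{propnil} yields a singularity at finite $\widetilde{t}$ with $(A,B)\to(0,0)$; since $\widetilde{t}=\kappa t$ and $\kappa$ is a fixed positive constant, this is a finite-$t$ singularity, and $C=\kappa^{-1}B\to0$ as well, so $(A(t),B(t),C(t))\to(0,0,0)$. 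In the second case, $\alpha<\tfrac{2B_0C_0}{3A_0}$, immortality in $\widetilde{t}$ is the same as immortality in $t$, while $\lim(A,B)=(0,\infty)$ forces $\lim C=\kappa^{-1}\lim B=\infty$, so $\lim_{t\to\infty}(A(t),B(t),C(t))=(0,\infty,\infty)$.

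There is no substantive obstacle here; this is a routine reduction once Theorem \ref{propnil} is available. The only points demanding care are (a) the explicit verification that the two logarithmic derivatives coincide, which identifies $B/C$ as a first integral, and (b) the bookkeeping of the time rescaling $\widetilde{t}=\kappa t$: one must check that it maps ``finite-time singularity'' and ``immortal'' to themselves (true because $\kappa>0$ is constant) and that it converts the threshold $\alpha\kappa$ back into the stated condition $\alpha\ge\tfrac{2B_0C_0}{3A_0}$.
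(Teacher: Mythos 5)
Your proposal is correct and follows essentially the same route as the paper: the paper likewise observes that $\tfrac{d}{dt}(B/C)=0$, substitutes $C(t)=\tfrac{C_0}{B_0}B(t)$, rescales time by $\widetilde{t}=\kappa t$ with $\kappa=B_0/C_0$ to recover the LRS system with effective coupling $\alpha\kappa$, and then invokes Theorem \ref{propnil}. The translation of the threshold $\alpha\kappa\geq\tfrac{2}{3}A_0^{-1}B_0^2$ into $\alpha\geq\tfrac{2B_0C_0}{3A_0}$ matches the paper exactly.
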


\subsection{$\text{Sol}$ Geometries}

In this case we have $\lambda = -2, \mu = 0$ and $\nu=2$, and the ODE system takes the following form: 

 \begin{align*}
\frac{dA}{dt}&=-4\frac{(A^2-C^2)}{BC}-2\alpha\frac{(A+C)^2(A^2-2AC+5C^2)}{AB^2C^2},\\
\frac{dB}{dt}&=4\frac{(A+C)^2}{AC}-2\alpha\frac{(A+C)^2(5A^2-6AC+5C^2)}{A^2BC^2}, \\
\frac{dC}{dt}&=-4\frac{(C^2-A^2)}{AB}-2\alpha\frac{(C+A)^2(C^2-2AC+5A^2)}{A^2B^2C}.
\end{align*}

We again verify  that if we set $A=C$ in the equations above, then $\frac{d}{dt} A= \frac{d}{dt}C$; hence the LRS condition $A=C$ is preserved by the RG-2 flow. 
Setting $A=C$ we obtain the (quite simple) reduced system of equations

\begin{align}
\label{dA/dtSol}
\frac{dA}{dt} &= -16 \alpha \frac{A}{B^2} \\
\label{dB/dtSol}
\frac{dB}{dt} &= 8 - 16 \alpha \frac{1}{B}.
\end{align}
This ODE system is semi-decoupled (in that the second equation (\ref{dB/dtSol}) involves only $B$) and can be solved explicitly if $\alpha=0$, and implicitly if $\alpha>0$. 

\subsubsection{Ricci Flow Case}

Setting $\alpha=0$, we have $\frac{dA}{dt}=0$ and $\frac{dB}{dt} = 8$. Hence the Ricci flow solutions for LRS $\text{Sol}$ geometries take the explicit form  
\begin{align*}
 A &= A_0\\
 B &= 8t +B_0
\end{align*}
for constants $A_0$ and $B_0$. Clearly these solutions exist for all future time, and since the imposed LRS condition sets $C=A$, the solutions all have cigar asymptotics.

\subsubsection{ RG-2 Case:}

It follows from a straightforward phase plane analysis that for the LRS $\text{Sol}$ geometries, the phase plane splits into two regions: In one of these regions ($B\ge 2 \alpha$), RG-2 flow solutions have cigar asymptotics, just like Ricci flow, while in the other ($B\le 2 \alpha$), RG-2 flow solutions have shrinker asymptotics, unlike Ricci flow. We can more easily obtain this result by using implicit  solutions of \eqref{dA/dtSol}-\eqref{dB/dtSol}, as follows.

As noted above (see also \cite{Das}), the LRS $\text{Sol}$ RG-2 evolution equations (\ref{dA/dtSol})-(\ref{dB/dtSol}) admit an implicit general solution, which (for initial data $(A(0), B(0))=(A_0, B_0)$) takes the following form
\begin{equation}
\label{implicitSol1}
A(t)=A_0e^{-4t/\alpha}\qquad\text{and}\qquad B(t)=2\alpha
\end{equation}
or 
\begin{equation}
\label{implicitSol2}
A(t)\left(1-\frac{2\alpha}{B(t)}\right)=A_0\left(1-\frac{2\alpha}{B_0}\right)\quad\text{and}\quad B(t)-B_0+2\alpha\ln\left|\frac{B(t)-2\alpha}{B_0-2\alpha}\right|=8t,
\end{equation}
depending on whether or not $B_0=2\alpha.$ 

 If $B_0=2\alpha$ then it follows from (\ref{implicitSol1}) that the flow is immortal,  with $A(t)$ decreasing to zero and $B(t)$ constant as $t\to\infty.$  
 
Now suppose that  $B_0>2\alpha$. We see from the second part of (\ref{implicitSol2}) that for positive $t$, we must have $B(t) > B_0$. Moreover, translating $t$, we see similarly that for any pair $t_2>t_1$, we must have $B(t_2)>B(t_1)$; hence $B(t)$ monotonically increases.  It then immediately follows from the first part of (\ref{implicitSol2}) that $A(t)$ monotonically decreases. This same equation also tells us that, since its right hand side is constant, $A(t) \to 0$ as $t\to\tmax$ if and only if $1-\frac{2 \alpha }{B(t)} \to \infty$ as $t\to\tmax$. This cannot happen, so $A(t)$ must converge to some $\bar A>0$; we write $\lim_{t \to \tmax } A(t)=\bar A,$ for some $\tmax $ which may or may not be finite.

If we presume that $\tmax $ is finite, then it must be true that $\lim_{t \to \tmax } B(t)=\infty.$
However, this limit is inconsistent with the second part of (\ref{implicitSol2}), so we must in fact have $\tmax =\infty$. It then follows from the second part of (\ref{implicitSol2}) that indeed $\lim_{t \to \infty} B(t)=\infty.$ Combining this with the first part of (\ref{implicitSol2}), we determine that 
\begin{equation*}
\bar{A}=A_0(1-\frac{2\alpha}{B_0}), 
\end{equation*}
 thereby relating the asymptotic value of $A$ to the initial data $(A_0,B_0).$

We suppose now instead that $B_0<2\alpha.$  Just as the second part of (\ref{implicitSol2}) implies that $B(t)$ monotonically increases if $B_0>2$, it implies that $B(t)$ monotonically decreases if $B_0<2\alpha.$ It then follows that there must exist some $\bar B  \in[0,B_0)$ and some $\tmax $ (possibly infinite) such that $\lim_{t \to \tmax } B(t)=\bar B$. Since $\bar B$ is finite, it follows from the second part of (\ref{implicitSol2}) that $\tmax $ is finite as well; so these solutions go singular in finite time. This can happen only if either $A(t)$ or $B(t)$ or both go to zero as $t \to \tmax $. Now if $\bar B >0$, then we must have $\lim_{t \to \infty} A(t)=0$; however this (and  $\bar B >0$) are inconsistent with the first part of (\ref{implicitSol2}). Therefore we must have $\bar B =0$. But then it follows from the first part of (\ref{implicitSol2})
that indeed it must be true that  $\lim_{t \to \infty} A(t)=0$.  We conclude that these solutions go singular in finite time, and have shrinkers asymptotics.

Combining all of these results, we obtain the following theorem:
\begin{theorem}\label{SOL} [RG-2 Flow for Locally Rotationally Symmetric $\text{Sol}$ Geometries]
Let $(A(t), B(t))$ be a solution of (\ref{dA/dtSol}), (\ref{dB/dtSol}) with initial data $(A_0, B_0).$ 
\begin{enumerate}
\item If $B_0>2\alpha,$ then $\tmax=\infty$ and $B(t)\to\infty$ and $A(t)\to A_0(1-\frac{2\alpha}{B_0})$ as $t\to\infty.$
\item If $B_0=2\alpha,$ then $\tmax=\infty$ and $B(t)=2\alpha$ for all $t>0,$ and $A(t)\to 0$ as $t\to\infty.$
\item If $B_0<2\alpha,$ then $\tmax<\infty$ and $A(t),B(t)\to 0$ as $t\to\tmax.$
\end{enumerate}
\end{theorem}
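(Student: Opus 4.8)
The plan is to exploit the semi-decoupled structure of the reduced system \eqref{dA/dtSol}--\eqref{dB/dtSol}. The $B$-equation $\dot B = 8(1 - 2\alpha/B)$ is autonomous, with unique equilibrium $B \equiv 2\alpha$: one has $\dot B > 0$ where $B > 2\alpha$ and $\dot B < 0$ where $0 < B < 2\alpha$. Since $\dot A = -16\alpha A/B^2 < 0$ throughout the positive phase plane, $A$ decreases monotonically along every solution. A short computation (equivalently, the first parts of \eqref{implicitSol1}--\eqref{implicitSol2}) shows that $A(1 - 2\alpha/B)$ is conserved along the flow; together with the separated integral $B - B_0 + 2\alpha\ln|(B-2\alpha)/(B_0-2\alpha)| = 8t$ valid when $B_0 \ne 2\alpha$, these two relations are the main engine of the argument, and the proof reduces to a sign analysis split according to the position of $B_0$ relative to $2\alpha$.

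The case $B_0 = 2\alpha$ is immediate: $B \equiv 2\alpha$ solves the autonomous equation, whence $\dot A = -(4/\alpha)A$ and $A(t) = A_0 e^{-4t/\alpha}$, which is defined for all $t$ and decays to $0$.

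For $B_0 > 2\alpha$, the sign of $\dot B$ forces $B(t)$ to increase and remain $> 2\alpha$ on its maximal interval. I then rule out a finite limit: if $B(t) \to \bar B$ with $\bar B > 2\alpha$ then $\dot B \to 8(1 - 2\alpha/\bar B) > 0$, which is impossible for a solution of an autonomous ODE converging to a non-equilibrium; equivalently, the left side of the separated integral tends to $+\infty$ precisely as $B \to \infty$, so $\tmax = \infty$ if and only if $B \to \infty$. Hence $\tmax = \infty$, $B(t) \to \infty$, and the conservation law gives $A(t) = A_0(1 - 2\alpha/B_0)/(1 - 2\alpha/B(t)) \to A_0(1 - 2\alpha/B_0) > 0$.

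For $B_0 < 2\alpha$, the sign of $\dot B$ gives $B$ strictly decreasing, and in fact $\dot B = 8(1 - 2\alpha/B) \le 8(1 - 2\alpha/B_0) < 0$, so $B$ decreases at least linearly and must reach $0$ at a finite time; the solution cannot be continued past that point, so $\tmax < \infty$. As $B \to 0^+$ one has $1 - 2\alpha/B \to -\infty$, so the conserved (negative) quantity $A(1 - 2\alpha/B) = A_0(1 - 2\alpha/B_0)$ forces $A(t) \to 0$ as well, yielding shrinker asymptotics. The point requiring most care is keeping the distinction between ``$B$ has a finite limit'' and ``$\tmax$ is finite'' straight in the two nontrivial cases and confirming that $\tmax < \infty$ exactly when $B_0 < 2\alpha$; the implicit solutions make this transparent, since the left-hand side of the $B$-relation is finite at $B = 0$ but unbounded as $B \to \infty$. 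All remaining steps are routine sign checks together with the single conservation law.
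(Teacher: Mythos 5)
Your proposal is correct and follows essentially the same route as the paper: both exploit the semi-decoupled structure, the conserved quantity $A(1-2\alpha/B)$, and the separated integral for $B$ (the paper's implicit solutions \eqref{implicitSol1}--\eqref{implicitSol2}), with the same case split on the sign of $B_0-2\alpha$. The only (cosmetic) difference is in case (3), where you bound $\dot B$ above by the negative constant $8(1-2\alpha/B_0)$ to get finite-time extinction directly, while the paper deduces $\tmax<\infty$ from the finiteness of $\lim B$ via the implicit relation.
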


It follows from these results that for the LRS $\text{Sol}$ geometries, the phase plane splits into two regions: In one of these regions ($B\ge 2 \alpha$), RG-2 flow solutions have cigar asymptotics, just like Ricci flow, while in the other ($B\le 2 \alpha$), RG-2 flow solutions have shrinker asymptotics, unlike Ricci flow.

\subsection{$\text{SL}(2,\R)$ Geometries}

As in the $\text{Nil}$ and $\text{Sol}$ cases,  we find that for LRS  $\text{SL}(2,\R)$ geometries the phase plane is partitioned into two regions, one which exhibits behavior similar to the Ricci flow, and one in which the behavior differs. Interestingly, solutions whose initial values start in the non-Ricci flow region in this case can have two distinct behaviors: either $(C(t),A(t)) \to (0,0)$ or $(C(t),A(t)) \to (0,2\alpha)$. Finding the curve that partitions the space is also more involved; instead of directly specifying  a solution of the trajectory equation (\ref{SLRdAdC}), we define a sequence of solutions of (\ref{SLRdAdC}) that converges to a limit solution whose graph partitions the phase plane.

For the $\text{SL}(2,\R)$ geometries, one has $\lambda=\mu=-2,$ and $\nu=2$, so the evolution equations take the form 
\begin{align}
\label{SLA}
\frac{dA}{dt}&=\frac{(2B+2C)^{2}-4A^{2}}{BC}-\frac{\alpha}{B^2C^2A} [2(A^4 + 2A^2(B+C)^2-8A(B-C)(B+C)^2)\\
\nonumber &\quad-2(B+C)^2(5B^2-6BC+5C^2)],\\
\label{SLB}
\frac{dB}{dt}&=\frac{(2C+2A)^{2}-4B^{2}}{AC}-\frac{\alpha}{A^2C^2B} [2(5A^4+4AC(B+C)^2+A^3(-8B+4C)\\
\nonumber &\quad-2(2A^2(B^2-4BC-C^2)+(B+C)^2(B^2-2BC+5C^2))],\\
\label{SLC}
\frac{dC}{dt}&=\frac{(-2A+2B)^{2}-4C^{2}}{AB}-\frac{\alpha}{A^2B^2C} 
[2(5A^4-4A^3(B-2C)-4AB(B+C)^2 \\
\nonumber &\quad-2(-2A^2(B^2+4BC-C^2)+(B+C)^2(5B^2-2BC+C^2)].
\end{align}

If we set $A=B$ in equations (\ref{SLA}) and (\ref{SLB})  we find that $\frac{d}{dt}(A-B)=0$. Thus the LRS condition $A=B$ is preserved for $\text{SL}(2,\R)$ geometries, and we work with the reduced ODE system
\begin{align}
\label{SLRedC}
\frac{dC}{dt}&=\frac{-4C^2}{A^2} -2\alpha \frac{C^3}{A^4},\\
\label{SLRedA}
\frac{dA}{dt}&=8+4\frac{C}{A}-{\alpha}\frac{(16A^2+24AC+10C^2)}{A^3}.
\end{align}

\subsubsection{Ricci Flow Case}

It is immediately clear from this set of equations that if we set $\alpha=0$, then $A(t)$ monotonically increases, while $C(t)$ monotonically decreases. We now show that along every solution, 
$A(t)$ increases to infinity, and there exists a positive constant $\bar{C}$ (generally different from one solution to another) such that $C(t) \to \bar{C}$.  The (reduced) Ricci flow equations take the form 

\begin{align}
\label{SLRFC}
\frac{dC}{dt}&=\frac{-4C^2}{A^2}, \\
\label{SLRFA}
\frac{dA}{dt}&=8+4\frac{C}{A};
\end{align}
consequently we have $\frac{dA}{dt} = 8+4\frac{C}{A}> 8$, from which it follows that   $A(t) \to \infty$. Next, using $(\ref{SLRFA})$ and $(\ref{SLRFC})$ we form the trajectory ODE
$$\frac{dA}{dC} = \frac{-2A^2 - AC}{C^2},$$ 
which has the explicit solution
\begin{equation*}
A(C) = \frac{C}{k C^2 - 1},
\end{equation*}
for some constant $k$. Substituting in the (arbitrary) initial data $(C(0),A(0)) = (C_0, A_0)$, we solve for $k$ and obtain
\begin{equation}
\label{A(C)}
A(C) =\frac{C_0^2 A_0 C }{C^2(A_0 + C_0 ) - C_0^2 A_0}.
\end{equation}
At and near the initial geometry $(C_0, A_0)$, the denominator  $C^2(A_0 + C_0 ) - C_0^2 A_0$ is positive. Indeed, formally inverting \eqref{A(C)}, we find that as $A \to \infty$, $C$ decreases to the (initial data dependent) value $ \bar{C} = (\frac{C_0^2 A_0}{A_0 + C_0})^{1/2}$. Since $B=A$, we see that these flows have pancake asymptotics. We also readily verify that these solutions are immortal.

\subsubsection{RG-2 Case}: 

Choosing $\alpha$ positive, we see immediately from equation (\ref{SLRedC}) that $C(t)$ monotonically decreases for any data, while \eqref{SLRedA} indicates no general monotonicity for $A(t)$. The monotonicity of $C(t)$ allows us to work with trajectories of the form $A(C)$, which satisfy the trajectory ODE
\begin{equation}
\label{SLRdAdC}
\frac{dA}{dC}=-\frac{A(4A^3+2A^2C-8\alpha A^2-12\alpha AC-5\alpha C^2)}{C^2(2A^2+\alpha C)}=:f(A,C).
\end{equation}

Finding the curve which partitions the  LRS  $\text{SL}(2,\R)$ phase plane into a region in which the RG-2 flow has the same asymptotics as the Ricci flow, and a region in which this is not the case, is not as simple as in the Nil and Sol cases. To do it, we specify a sequence of solutions of the trajectory ODE which have their initial values contained on the zero level set of the function $f(A,C)$, and then show that this sequence converges to a curve which solves the trajectory ODE but has its initial point off the level set of $f(A,C)$ (with both numerator and denominator of $f$ approaching zero).  More specifically, we proceed as follows: 1) We 
establish that the zero level set  $\{f(A,C)=0\}$ is the graph of a smooth, strictly increasing function $A=g(C)$. 2) We show that for any solution $\Phi(C)$ of  (\ref{SLRdAdC}), defined on a maximum interval $(C^-_{\Phi}, C^+_{\Phi})$, $\lim_{C \to C^-_{\Phi}} \Phi(C)$ exists (possibly infinite). 3) We specify a sequence of solutions $\phi_n$ of (\ref{SLRdAdC}) and prove that they converge to a solution $\phi$ whose graph partitions the phase plane. After carrying through these three steps, we determine the asymptotics of solutions which lie on either side of $\phi$, verifying that indeed those on one side match the asymptotics of the Ricci flow, and those on the other side do not.

We now carry out the details of these steps. We start with the following result concerning the zero level set of $f(A,C)$:

\begin{lemma}
\label{g(C)}
For $A,C\in(0,\infty)$, the set $\{f(A,C)=0\}$ is the graph of a smooth strictly increasing function $A=g(C)$ which is defined on $(0,\infty)$, and has $\lim_{C \to 0} g(C) =2\alpha$. 
\end{lemma}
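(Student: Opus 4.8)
The plan is to clear denominators and reduce the lemma to a study of the zero set of the cubic
\[
P(A,C) := 4A^3 + 2A^2 C - 8\alpha A^2 - 12\alpha A C - 5\alpha C^2
\]
in the open first quadrant: since $A>0$ and $C^2(2A^2+\alpha C)>0$ there, one has $f(A,C)=0$ if and only if $P(A,C)=0$. The key structural observation is that $P$ is only \emph{quadratic} in $C$, namely $5\alpha C^2+(12\alpha A-2A^2)C-4A^2(A-2\alpha)=0$, and that on solving, the discriminant collapses to the clean form $4A^2\bigl(A^2+8\alpha A-4\alpha^2\bigr)$, giving the two branches $C=\frac{A\bigl(A-6\alpha\pm\sqrt{A^2+8\alpha A-4\alpha^2}\bigr)}{5\alpha}$. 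A short argument with Vieta's formulas -- the product of the two roots is $-\tfrac{4A^2(A-2\alpha)}{5\alpha}$ and their sum is $\tfrac{2A(A-6\alpha)}{5\alpha}$ -- shows that $P(\cdot,C)=0$ has a root with $C>0$ precisely when $A>2\alpha$, and that it is the ``$+$'' branch. Hence the zero set of $f$ in the positive quadrant is exactly the graph of
\[
C = h(A) := \frac{A\bigl(A-6\alpha+\sqrt{A^2+8\alpha A-4\alpha^2}\bigr)}{5\alpha}, \qquad A\in(2\alpha,\infty),
\]
with $h(A)\to 0$ as $A\to 2\alpha^+$.

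It then remains only to invert $h$. The radicand $A^2+8\alpha A-4\alpha^2$ is at least $16\alpha^2>0$ for $A\ge 2\alpha$, so $h$ is smooth on $(2\alpha,\infty)$; moreover $h(A)\sim \tfrac{2A^2}{5\alpha}\to\infty$ as $A\to\infty$. The crucial point is strict monotonicity. Writing $s(A)=\sqrt{A^2+8\alpha A-4\alpha^2}$, direct differentiation together with the identity $s(A)^2+A(A+4\alpha)=2A^2+12\alpha A-4\alpha^2$ gives
\[
5\alpha\, s(A)\, h'(A) = s(A)^2 + (2A-6\alpha)\,s(A) + A(A+4\alpha),
\]
and the right-hand side, regarded as a quadratic in the variable $s(A)$, has discriminant $(2A-6\alpha)^2-4A(A+4\alpha) = -4\alpha(10A-9\alpha)$, which is negative for $A>2\alpha$; therefore $h'(A)>0$ throughout. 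Consequently $h:(2\alpha,\infty)\to(0,\infty)$ is a smooth increasing bijection, its inverse $g:=h^{-1}:(0,\infty)\to(2\alpha,\infty)$ is smooth and strictly increasing, and $\lim_{C\to 0^+}g(C)=2\alpha$ because $h(A)\to 0^+$ as $A\to 2\alpha^+$. Since the zero set of $f$ is $\{(A,C)\in(0,\infty)^2 : C=h(A)\}$ and $C=h(A)\iff A=g(C)$, it is precisely the graph of $g$, as claimed.

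The only genuinely delicate step is the strict monotonicity of $h$ -- equivalently, the assertion that no point of the zero set carries a vertical tangent. I expect the ``discriminant in $s$'' computation above to settle it cleanly. If one wishes to avoid the explicit branch formula altogether, an alternative is to observe that $\partial_C P = 2A^2-12\alpha A-10\alpha C$ equals $-2A\,s(A)<0$ along $\{P=0\}$, so that the zero set is globally a graph $C=h(A)$ (with $h'=-\partial_A P/\partial_C P$) by the implicit function theorem, and then verify $\partial_A P>0$ on it by the same reduction to a sign-definite quadratic. Everything else -- clearing denominators, the Vieta sign analysis, the asymptotics as $A\to\infty$, and the inverse function theorem -- is routine.
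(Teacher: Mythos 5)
Your proposal is correct and follows essentially the same route as the paper: reduce $f=0$ to the cubic $P(A,C)=0$, view it as a quadratic in $C$, isolate the positive branch $C=h(A)$ (your discriminant $4A^2(A^2+8\alpha A-4\alpha^2)$ is exactly the paper's $(A-6\alpha)^2+20\alpha(A-2\alpha)$ up to the factor $4A^2$), prove $h'>0$, and invert via the inverse function theorem. The only differences are cosmetic --- you use Vieta's formulas to select the branch and a sign-definite quadratic in $s(A)$ to get $h'>0$, where the paper rejects the negative branch by contradiction and writes $h'$ as a sum of manifestly positive terms --- and your explicit restriction of the domain to $A\in(2\alpha,\infty)$ is if anything slightly more careful than the original.
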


\begin{proof}
From (\ref{SLRdAdC}) we see that $f(A,C)=0$ if and only if 
\begin{equation}
\label{QuadraticC}
4A^3 + 2A^2 C - 8\alpha A^2 - 12 \alpha AC - 5 \alpha C^2 = 0.
\end{equation} 
This expression is quadratic in $C$; it is satisfied if and only if  \\ $C=\frac{A}{5\alpha}\left(A-6\alpha\pm\sqrt{(A-6\alpha)^2+20\alpha(A-2\alpha)}\right)$. Only the positive root is consistent with the requirement that $A>0$ and $C>0$, since if we choose the negative root, then $C>0$ implies that we must have 
$$A-6\alpha>\sqrt{(A-6\alpha)^2+20\alpha(A-2\alpha)}> 0,$$
from which it follows that $A>6\alpha>2\alpha$; then 
$$A-6\alpha > \sqrt{(A-6\alpha)^2+20\alpha(A-2\alpha)}> A-6\alpha,$$ which is a contradiction. 
 Consequently $f(A,C)=0$ and $A,C>0$ if and only if
\begin{align}
\label{SL(2R)C=h(A)}
C=\frac{A}{5\alpha}\left(A-6\alpha+\sqrt{(A-6\alpha)^2+20\alpha(A-2\alpha)}\right)=:h(A)
\end{align}

Calculating the derivative of $h(A)$, we obtain
$$h'(A)=\frac{h(A)}{A}+\frac{A}{5\alpha}\left(1+\frac{A+4\alpha}{\sqrt{(A-6\alpha)^2+20\alpha(A-2\alpha)}}\right),$$
which we readily verify is positive so long as $A$ and $h(A)$ are positive. Consequently, it follows from the inverse function theorem that the function $g:=h^{-1}$ exists, and moreover it is smooth and strictly increasing. Noting that the range of $h$ is $(0,\infty)$, we see that the domain of $g(C)$ is $(0,\infty)$ as well. Further, since we readily verify that $h(2\alpha)=0$, we see that the $(0, 2 \alpha)$ is a limit point of the graph of $g(C).$
\end{proof}

The next step is to show that any solution of (\ref{SLRdAdC}) converges as $C$ decreases. The proof of this uses the results we have just established for $g(C).$

\begin{lemma}
\label{PhiC}
Let $\Phi(C)$ be a solution of (\ref{SLRdAdC}), defined on a maximal interval with infimum $C_\Phi^- $. Then $\Phi(C)$ converges to some value (possibly $\infty$) as $C \to C_\Phi^- $. 
\end{lemma}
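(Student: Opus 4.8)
The plan is to exploit the monotonicity of $C(t)$ (established from \eqref{SLRedC}) and the structure of the zero level set $\{f(A,C)=0\}$ furnished by Lemma~\ref{g(C)}, namely that it is the graph of a smooth strictly increasing function $A=g(C)$ with $\lim_{C\to 0}g(C)=2\alpha$. The key observation is that the sign of $\frac{dA}{dC}=f(A,C)$ is governed entirely by the cubic-in-$A$ factor $4A^3+2A^2C-8\alpha A^2-12\alpha AC-5\alpha C^2$ in the numerator, since the denominator $C^2(2A^2+\alpha C)$ is positive throughout the quadrant; equivalently, by whether $(A,C)$ lies above or below the curve $A=g(C)$ (i.e. $C$ smaller or larger than $h(A)$). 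So the curve $A=g(C)$ partitions the open quadrant into a region where $f>0$ and a region where $f<0$, and a solution $\Phi(C)$ is monotone in $C$ as long as it stays on one side of this curve.

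First I would show that once $\Phi$ lies strictly above the graph of $g$ (respectively strictly below), it stays there: along $A=g(C)$ one has $\frac{dA}{dC}=0$ while $\frac{d}{dC}g(C)=g'(C)>0$, so an orbit cannot cross from the region $\{A>g(C)\}$ downward through the graph, and symmetrically it cannot cross upward from $\{A<g(C)\}$; this is the usual comparison-of-slopes argument already used for $\pi_{\text{Nil}}$ earlier in the paper. (The one subtlety is behavior near the endpoint $(0,2\alpha)$ of the graph, where both numerator and denominator of $f$ degenerate; but since we only need the limit of $\Phi$ as $C\to C_\Phi^-$, and $C_\Phi^-\ge 0$, I can argue on the half-open interval $(0,C_\Phi^+)$ and handle $C_\Phi^-=0$ separately.) Consequently, for $C$ in a left-neighborhood of $C_\Phi^-$, the solution $\Phi(C)$ is eventually on one fixed side of the graph of $g$, hence $f(\Phi(C),C)$ has a fixed sign there, so $\Phi$ is eventually monotone as $C\downarrow C_\Phi^-$.

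A monotone function of $C$ has a limit (finite or $\pm\infty$) as $C\to C_\Phi^-$; since $\Phi>0$, the limit lies in $[0,\infty]$, and this is exactly the claim. I would also note why $C_\Phi^-$ and this limit are compatible: if $\lim_{C\to C_\Phi^-}\Phi(C)$ were finite and positive and $C_\Phi^->0$, then $(C_\Phi^-,\lim\Phi)$ is an interior point of the quadrant where $f$ is smooth, contradicting maximality of the interval — so either the limit is $0$ or $\infty$, or $C_\Phi^-=0$; in every case the limit exists in $[0,\infty]$, which is all that is asserted.

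The main obstacle I expect is the degenerate point $(0,2\alpha)$: near it the graph of $g$ accumulates, $f$ is a $0/0$ form, and one must make sure the comparison-of-slopes argument still pins a solution to one side — this is presumably why the authors, in the surrounding discussion, pass to a rescaled or inverted picture and construct the partitioning curve $\phi$ as a limit of solutions $\phi_n$ with data on $\{f=0\}$ rather than writing it down explicitly. For the present lemma, though, one does not need to locate that curve: it suffices that away from $(0,2\alpha)$ the sign of $f$ is constant on each side of the (strictly monotone) graph of $g$, giving eventual monotonicity of $\Phi(C)$ and hence convergence.
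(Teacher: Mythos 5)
Your overall strategy is the same as the paper's: use Lemma \ref{g(C)} to view the graph of $g$ as the zero set (nullcline) of $f$, observe that $f$ has a fixed sign on each side of it, deduce that $\Phi$ is eventually monotone as $C\to C_\Phi^-$, and conclude that the limit exists in $[0,\infty]$. Your worry about the degenerate point $(0,2\alpha)$ is a non-issue for this lemma, since the graph of $g$ lies over $(0,\infty)$ and that point is only a limit point of it.

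There is, however, one incorrect step: the claim that ``once $\Phi$ lies strictly above the graph of $g$ (respectively strictly below), it stays there.'' The graph of $g$ is a nullcline, not an orbit, so it is not protected by uniqueness of solutions (unlike $\pi_{\text{Nil}}$, which really is a solution curve of the trajectory ODE); trajectories do cross it, just in only one direction. Indeed, at a crossing point $c$ one has $\Phi'(c)=0$ and $g'(c)>0$, so $(\Phi-g)'(c)=-g'(c)<0$: hence $\Phi<g$ for $C$ slightly greater than $c$ and $\Phi>g$ for $C$ slightly less than $c$. So, traversed in the direction of decreasing $C$, orbits pass from $\{A<g(C)\}$ into $\{A>g(C)\}$; the region above $g$ is invariant under decreasing $C$, but the ``symmetric'' statement for the region below is false. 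Your conclusion survives because crossings go only one way, so there is at most one of them and $\Phi$ is eventually on a fixed side of the graph of $g$, hence eventually monotone. This is precisely how the paper argues: it treats separately the solutions that never meet the graph of $g$ (globally monotone) and those that meet it, showing for the latter that the meeting point is unique and that afterwards $\Phi>g$ and $\Phi'<0$. You should replace the two-sided invariance claim with this one-crossing argument.
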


\begin{proof} 
We first consider solutions $\Phi$ which do not intersect $g$, the graph of the zero level set of $f(A,C)$. Since the domain of $g$ is $(0,\infty)$, the graph of $g$ partitions the phase plane; it follows that $\Phi$ is either bounded above or below $g$, which implies that either $\Phi'>0$ or $\Phi'<0$. In either case, $\Phi$ is monotonic, so convergence as  $C \to C_\Phi^- $ follows.

We now consider solutions which do intersect $g(C)$; so there   
exists $c$ such that $\Phi(c)=g(c).$ We first show that there is at most one such $c$, and then show that the limit as $C\to {C^-_{\Phi}}$  exists. Since (by definition of $g$) $\Phi'(c)=0$ and since (as shown in the proof of Lemma \ref{PhiC}) $g'(c)>0$, we determine that $\Phi(C)>g(C)$ for $C\in(c-\varepsilon,c)$ for some $\varepsilon>0.$ Now  say there exists $\tilde c<c$ such that $\Phi(\tilde c)=g(\tilde c)$, and choose the largest such $\tilde c$. Then since $\Phi'(\tilde c)=0$ and since $g'(\tilde c)>0$ it follows  that $\Phi(C)<g(C)$ for $C\in(\tilde c, \tilde c+\tilde \varepsilon)$ for some $\tilde \varepsilon>0$. Since $\Phi(C) > g(C)$ on $(c-\epsilon,c)$, it follows from  the intermediate value theorem that there exists some $\hat c$ with $\Phi(\hat c) = 0$ and $\tilde c  < \hat c < c$, which contradicts the assumption that $\tilde c$ is the largest such value.  We conclude that such a $\tilde c$ cannot exist. Consequently $\Phi(C)>g(C)$ for all $C<c$, which implies that $\Phi'(C)<0$ for $C<c.$  The existence of $\lim_{C \to C^-_{\Phi}} \Phi$ (possibly infinite) then follows.

\end{proof}

We now define the sequence $\phi_n$ of solutions (designed to converge to the partitioning solution $\phi$) by specifying their initial data as a sequence of points along the graph of $g$:

\begin{definition} 
\label{phin}For each $n\in\N$, $\varphi_n$ is the solution of \eqref{SLRdAdC} such that $$\varphi_n\left(\frac{1}{n}\right)=g\left(\frac{1}{n}\right).$$  
\end{definition}

It is clear from \eqref{SLRdAdC} that the solution $\phi_n$ exists on some interval surrounding $\frac{1}{n}$.  We wish to show that in fact, the maximal domain of $\phi_n$ is $(C_n^-, \infty)$, with $0 \le C_n^- < \frac{1}{n}$, and with $\lim_{C \to C_n^-} \phi_n = \infty$.
We do this in two steps, via the following two lemmas:

\begin{lemma}
\label{phin}
The maximal domain of the  function $\varphi_n$  (as defined in Definition \ref{phin})  includes $[\frac{1}{n},\infty)$.
\end{lemma}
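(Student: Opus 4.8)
The plan is to use the zero level set $A=g(C)$ from Lemma \ref{g(C)} as an upper barrier for $\varphi_n$ to the right of $\tfrac1n$, which prevents $\varphi_n$ from blowing up at any finite value of $C$, and then to let the standard ODE continuation theorem finish the job.

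First I would check that $\varphi_n$ leaves its starting point strictly below the barrier. By construction $\varphi_n(\tfrac1n)=g(\tfrac1n)$, and the defining property of $g$ gives $\varphi_n'(\tfrac1n)=f(g(\tfrac1n),\tfrac1n)=0$, whereas $g'(\tfrac1n)>0$ (Lemma \ref{g(C)}; its proof exhibits $h'>0$, so $g'=1/h'>0$ throughout $(0,\infty)$). Hence the $C^1$ function $\varphi_n-g$ vanishes at $\tfrac1n$ with strictly negative derivative, so $\varphi_n(C)<g(C)$ for $C$ slightly larger than $\tfrac1n$ --- the mirror image of the one-sided inequality obtained in the proof of Lemma \ref{PhiC}. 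Next I would rule out $\varphi_n$ ever returning to $g$: were $\varphi_n$ to meet $g$ at a first point $C_0>\tfrac1n$ of its maximal domain, then $\varphi_n-g$ would reach $0$ at $C_0$ from below, forcing $(\varphi_n-g)'(C_0)\ge 0$, in contradiction with $(\varphi_n-g)'(C_0)=f(g(C_0),C_0)-g'(C_0)=-g'(C_0)<0$. Thus $\varphi_n(C)<g(C)$ for every $C>\tfrac1n$ in the maximal domain.

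With this barrier in hand, fix an arbitrary $M>\tfrac1n$ and obtain a priori bounds on $[\tfrac1n,M]$. Since $g$ is increasing, the barrier gives the upper bound $0<\varphi_n(C)\le g(M)$. For a lower bound I would use the factorization of the right-hand side of \eqref{SLRdAdC} as $f(A,C)=A\,q(A,C)$, where
\begin{equation*}
q(A,C)=-\frac{4A^3+2A^2C-8\alpha A^2-12\alpha AC-5\alpha C^2}{C^2(2A^2+\alpha C)}\,;
\end{equation*}
on the set $\{\,0<A\le g(M),\ \tfrac1n\le C\le M\,\}$ the numerator is bounded while the denominator is at least $\alpha C^3\ge \alpha n^{-3}>0$, so $|q|\le Q$ for a constant $Q=Q(n,M)$. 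Along the trajectory this says $\bigl|\tfrac{d}{dC}\ln\varphi_n\bigr|\le Q$, hence $\varphi_n(C)\ge g(\tfrac1n)\,e^{-Q(M-1/n)}>0$ on $[\tfrac1n,M]$. So $\varphi_n$ stays within a fixed compact subset of $(0,\infty)\times(0,\infty)$ on $[\tfrac1n,M]$, and because $f$ is rational with denominator bounded away from zero there, the continuation theorem forbids the maximal interval of $\varphi_n$ from terminating before $M$. As $M$ was arbitrary, the maximal domain of $\varphi_n$ contains $[\tfrac1n,\infty)$.

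The main obstacle --- and essentially the only non-routine step --- is the barrier argument: one must verify that $\varphi_n$ detaches \emph{strictly below} $g$ and then cannot climb back, and both facts hinge on $g$ being a strictly increasing curve through the very point where $\varphi_n$ starts with zero slope. Once this trapping is in place, the a priori estimate and the appeal to ODE continuation are routine.
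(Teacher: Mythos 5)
Your proof is correct and follows essentially the same route as the paper: both arguments hinge on trapping $\varphi_n$ strictly below the increasing barrier $g$ for $C>\tfrac1n$ via the sign of $(\varphi_n-g)'$ at any putative crossing, and on the degeneracy of $f$ along $A=0$ to rule out collapse. Your quantitative bound $\bigl|\tfrac{d}{dC}\ln\varphi_n\bigr|\le Q$ combined with the continuation theorem is a slightly tidier packaging than the paper's contradiction argument, which splits a hypothetical finite right endpoint into the cases $\varphi_n\to 0$ and $\varphi_n\to\infty$ and rules out the former by uniqueness against the solution $A\equiv 0$; your version has the small advantage of not needing to justify that dichotomy.
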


\begin{proof} Labeling the maximal domain of $\varphi_n$ as $(C_n^-,C_n^+)$, for $\frac{1}{n} \in (C_n^-,C_n^+)$, our task in proving this lemma is  to show that $C_n^+ = \infty$. To show this, we argue by contradiction. 
Supposing that $C_n^+< \infty$, we have either (i) $\lim_{C \to C_n^+} \phi_n(C)=0$, or (ii) $\lim_{C \to C_n^+} \phi_n(C)=\infty$.

To rule out the first possibility, we use the familiar argument that since the right hand side of the trajectory ODE \eqref{SLRdAdC} is well-behaved in the neighborhood of any hypothesized limit point $(C_n^+, 0)$, and since $A=0$ is a solution of \eqref{SLRdAdC} which passes through that point, then it follows from the well-posedness of the initial value problem for \eqref{SLRdAdC} near $(C_n^+, 0)$ that there can be no other solution with this limit point.

We now suppose that (ii) holds. We know from the definition of $\phi_n$ that $\phi_n'(1/n) = 0$ and we know from Lemma \ref{g(C)} that  $g'(1/n) > 0$; hence it follows  that  $\varphi_n(C)<g(C)$ on some interval $(\frac{1}{n},\frac{1}{n}+\varepsilon)$. Since the domain of $g(C)$ is $(0,\infty)$ and since we are assuming that $\phi_n(C) \rightarrow \infty$ as $C \to C_n^+< \infty$,  there must exist a value $\tilde c$ with  $\frac{1}{n}< \tilde c < C_n^+$ such that  $\varphi_n(\tilde c)=g(\tilde c)$. However, as argued in the proof of Lemma \ref{PhiC}, there cannot be two values of $c$ with $\phi_n(c) = g(c)$, so we consequently have a contradiction. This rules out case ii), and we conclude that $C_n^+ = \infty.$ 
\end{proof}

\begin{lemma}
The maximal domain of the function $\varphi_n$ is $(C_n^-, \infty)$ for $0 \le C_n^-< \frac{1}{n}$, with \\ $\lim_{C\to C_n^-} \phi_n(C) =\infty.$
\end{lemma}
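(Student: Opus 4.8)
The plan is to combine the preceding lemma, which gives $C_n^+=\infty$, with the elementary observations that $\varphi_n$ is only defined for $C>0$ (so $C_n^-\ge0$) and that $\frac1n$ lies in the interior of the domain (so $C_n^-<\frac1n$); together these already yield that the maximal domain is $(C_n^-,\infty)$ with $0\le C_n^-<\frac1n$. It then remains to show $\lim_{C\to C_n^-}\varphi_n(C)=\infty$. For this I would invoke Lemma~\ref{PhiC}, which guarantees that this limit exists (possibly $\infty$), so everything reduces to ruling out a finite value.

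First I would record the relevant monotonicity. Since $\varphi_n(\frac1n)=g(\frac1n)$ lies on the zero set of $f$, we have $\varphi_n'(\frac1n)=f(g(\frac1n),\frac1n)=0$, whereas $g'(\frac1n)>0$ by Lemma~\ref{g(C)}; so the argument used in the proof of Lemma~\ref{PhiC} gives $\varphi_n(C)>g(C)$, and therefore $\varphi_n'(C)<0$, for every $C\in(C_n^-,\frac1n)$. Thus $\varphi_n$ is strictly decreasing there, and as $C$ decreases to $C_n^-$ the value $\varphi_n(C)$ increases monotonically to its limit $L\in(g(\tfrac1n),\infty]$. By Lemma~\ref{g(C)}, $g(\frac1n)>\lim_{C\to0}g(C)=2\alpha$, so $L>2\alpha$; writing $m:=g(\frac1n)$, we have $m<\varphi_n(C)<L$ for all $C\in(C_n^-,\frac1n)$.

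Now suppose, for contradiction, that $L<\infty$. If $C_n^->0$, then the point $(C,A)=(C_n^-,L)$ lies in the open set $\{C>0,\ A>0\}$ on which the right-hand side $f$ of \eqref{SLRdAdC} is smooth; since the trajectory $C\mapsto(C,\varphi_n(C))$ tends to this regular point as $C\to C_n^-$, the standard ODE continuation theorem extends $\varphi_n$ to the left of $C_n^-$, contradicting maximality of the domain. If instead $C_n^-=0$, I would extract a blow-up estimate near $(0,L)$: on the strip $\{m\le A\le L,\ 0<C\le C_0\}$ the numerator of $f(A,C)$ equals $-4A^3(A-2\alpha)$ at $C=0$, which is bounded above by a strictly negative constant because $A\ge m>2\alpha$; by continuity, for $C_0$ small enough it stays bounded above by a negative constant on the whole strip, while the denominator $C^2(2A^2+\alpha C)$ is $O(C^2)$ there. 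Hence $f(A,C)\le -c/C^2$ on the strip for some $c>0$, and since $\varphi_n(C)\in[m,L]$ throughout $(0,\frac1n)$, integrating $\varphi_n'(C)\le -c/C^2$ from $C$ to $C_0$ yields $\varphi_n(C)\ge\varphi_n(C_0)-c/C_0+c/C\to\infty$ as $C\to0^+$, contradicting $\varphi_n\le L$. In both cases we reach a contradiction, so $L=\infty$, which is the claim.

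The step I expect to be the main obstacle is the case $C_n^-=0$: there the left endpoint is a genuine singular point of the trajectory ODE \eqref{SLRdAdC} (the denominator vanishes on $\{C=0\}$), so the continuation theorem does not apply and one must instead quantify how $f$ blows up as $(C,A)\to(0,L)$. What makes this work is precisely the inequality $m=g(\frac1n)>2\alpha$ from Lemma~\ref{g(C)}, which forces the leading term $-4A^3(A-2\alpha)$ of the numerator to be negative and hence $f\sim-\mathrm{const}/C^2$ near that endpoint.
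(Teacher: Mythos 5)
Your proof is correct and follows the same overall structure as the paper's: $C_n^-\ge 0$ from positivity, existence of the limit from Lemma \ref{PhiC}, and the continuation theorem to rule out a finite limit when $C_n^->0$. The one place you diverge is the subcase $C_n^-=0$, which you rightly identify as the crux. The paper disposes of it with its ``usual argument'': inverting the trajectory ODE to $dC/dA=1/f$, noting that near a hypothetical finite limit point $(\bar A,0)$ with $\bar A>2\alpha$ the inverted right-hand side is regular and $C\equiv 0$ is a solution through that point, so uniqueness forbids any other trajectory from reaching it. You instead prove a quantitative blow-up bound $f(A,C)\le -c/C^2$ on a strip $\{m\le A\le L,\ 0<C\le C_0\}$ (using that the numerator tends to $-4A^3(A-2\alpha)<0$ because $m=g(\tfrac1n)>2\alpha$) and integrate to force $\varphi_n\to\infty$, contradicting boundedness. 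Both arguments are valid and rest on the same essential fact --- that $\varphi_n$ stays above $g$ and hence above $2\alpha$ near $C=0$; yours is more explicit and self-contained, at the cost of a few lines of estimation, while the paper's reuses the uniqueness/axis-avoidance machinery it has already set up. Your $C_n^-\ge 0$ step is slightly more terse than the paper's (you appeal to the domain of definition rather than to the singularity of $f$ at $C=0$), but in the context of the quarter phase plane this is unobjectionable.
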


\begin{proof}
The tasks here are to show that $C_n^- \ge 0$, and to verify the indicated limit. 

To show that negative values for $C$ are not contained in the maximal (connected) domain of the solution $\phi_n$, we observe that for $0< C< \frac{1}{n}$, one has $\phi_n(C)  > g(C)> 2\alpha$ and $\phi'_n(C) <0$. It follows that $f(\phi_n(C),C)$ is badly behaved for $C \to 0$, and consequently  $C_n^-\ge 0.$

We now verify  that $\varphi_n(C)\to \infty$ as $C\to C_n^- .$  From Lemma \ref{PhiC} we know that $\varphi_n(C)$ does indeed converge to some value (possibly $\infty$) as $C\to C_n^- .$  If $C_n^- >0$ this value must be $\infty,$ since otherwise $f(C_n^-, \phi_n(C_n^-))$ would be well defined and the solution $\phi_n$  could be extended beyond that point. If $C_n^- =0$, the fact that we cannot have $\varphi_n(C)\to \bar{A}<\infty$ as $C\to 0$ follows by our usual argument, relying  in this case on $dC/dA$, which is given by the inverse of the right hand side of \eqref{SLRdAdC} . 
 \end{proof}

With the above results established for the solutions $\phi_n: (C_n^-,\infty) \to \R^+$, we now show that the sequence $(\phi_n)$ converges to a function $\phi:(0,\infty)\to \R+$ which is a solution of the trajectory ODE, and which splits the phase plane for the LRS \text{SL(2,R)} geometries into two regions of differing behavior for the RG-2 flow.

\begin{proposition}
\label{propositionphi}
The sequence of solutions  $(\phi_n)$ converges to a solution $\phi$ of \eqref{SLRdAdC} with domain $(0,\infty)$. Furthermore, given any solution $\Phi$ of (\ref{SLRdAdC}),

i) if  $\Phi(c) > \phi(c)$ for some $c>0$, then there exists $C_\Phi^-  \ge 0$ such that 
$\Phi \to \infty$ as $C \to C_\Phi^- .$

ii) if $\Phi(c) \le \phi(c)$ for some $c > 0$, then $\Phi(C)$ converges to either 0 or $2\alpha$ as $C \to {C_\Phi^- }.$

\end{proposition}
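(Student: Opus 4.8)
The plan is to first pin down $\phi$ as a monotone limit and then read off the behaviour of an arbitrary solution $\Phi$ by comparing it with $\phi$ and with the $\phi_n$.

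\emph{Step 1: the sequence converges to a solution defined on $(0,\infty)$.} First I would show $(\phi_n)$ is pointwise decreasing. By the lemmas above, $\phi_n$ is defined and lies strictly below $g$ on $(1/n,\infty)$ with $\phi_n(1/n)=g(1/n)$; hence for $n<m$ we have $\phi_m(1/n)<g(1/n)=\phi_n(1/n)$, and since two distinct solutions of \eqref{SLRdAdC} cannot cross, $\phi_m<\phi_n$ wherever both are defined. The same non-crossing argument applied near $C_n^-$ forces $C_m^-\le C_n^-$, so $(C_n^-)$ decreases; since $C_n^-<1/n$ it decreases to $0$, and thus for every $C>0$ the decreasing sequence $\phi_n(C)$ is eventually defined and converges to some $\phi(C)$. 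To get a positive lower bound, note that since $\phi_m$ lies below $g$ on $[1/m,\infty)$ it is increasing there (the right side of \eqref{SLRdAdC} is positive below $g$), so $\phi_m(C)>\phi_m(1/m)=g(1/m)$ for $C\ge 1/m$; letting $m\to\infty$ and using $g(1/m)\to 2\alpha$ (Lemma \ref{g(C)}) gives $\phi(C)\ge 2\alpha$ for all $C>0$. On any compact $[a,b]\subset(0,\infty)$ the $\phi_n$ are then trapped between $2\alpha$ and $\max_{[a,b]}\phi_N$ for a fixed large $N$, hence uniformly bounded and, by \eqref{SLRdAdC}, uniformly Lipschitz; passing to the limit in the integral form of \eqref{SLRdAdC} shows $\phi$ is a $C^1$ solution on all of $(0,\infty)$, and since $\phi$ is finite there this is its maximal domain. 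Finally $\phi\le\phi_n<g$, so $\phi$ lives in the region where the right side of \eqref{SLRdAdC} is positive and is therefore strictly increasing; together with $\phi\ge 2\alpha$ this gives $\lim_{C\to 0^+}\phi(C)=2\alpha$, and since $A\equiv 2\alpha$ is not a solution of \eqref{SLRdAdC} we conclude $\phi(C)>2\alpha$ for all $C>0$. The graph of $\phi$, a function on $(0,\infty)$, partitions the phase plane.

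\emph{Step 2: solutions with $\Phi(c)>\phi(c)$ (claim (i)).} Here I would compare $\Phi$ with the $\phi_n$ rather than with $g$. Since $\phi_n(c)\downarrow\phi(c)<\Phi(c)$, pick $n_0$ with $C_{n_0}^-<c$ and $\phi_{n_0}(c)<\Phi(c)$; non-crossing then gives $\phi_{n_0}<\Phi$ wherever both are defined, and in particular $C_\Phi^-\ge C_{n_0}^-$ (otherwise $\Phi$ would be forced to blow up before $C_{n_0}^-$, contradicting that $\Phi$ is defined and finite there). Because $\phi_{n_0}(C)\to\infty$ as $C\to C_{n_0}^-$ while $\Phi\ge\phi_{n_0}>0$, the solution $\Phi$ cannot converge to a finite value as $C\to C_\Phi^-$: if $C_\Phi^->0$ a finite limit would be a regular point of \eqref{SLRdAdC} (it is bounded away from $0$) and $\Phi$ would extend past it, contradicting maximality; if $C_\Phi^-=C_{n_0}^-$ the bound $\Phi\ge\phi_{n_0}$ already forces $\Phi\to\infty$. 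Hence $\Phi\to\infty$ as $C\to C_\Phi^-$ for some $C_\Phi^-\ge 0$, which is (i).

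\emph{Step 3: solutions with $\Phi(c)\le\phi(c)$ (claim (ii)).} If $\Phi(c)=\phi(c)$ then $\Phi=\phi$ by uniqueness, and by Step 1 it converges to $2\alpha$. If $\Phi(c)<\phi(c)$, then $\Phi<\phi<g$ throughout, so the right side of \eqref{SLRdAdC} along $\Phi$ is positive, $\Phi$ is strictly increasing, and $\Phi\le\Phi(c)$ is bounded; thus $\Phi$ cannot blow up, and its maximal interval must reach $C=0$ (at a positive interior value $\Phi$ would extend, and it cannot reach the axis $A=0$ at positive $C$ by uniqueness against the solution $A\equiv 0$, as used repeatedly above). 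By monotonicity $L:=\lim_{C\to 0^+}\Phi(C)$ exists in $[0,2\alpha]$. To exclude $L\in(0,2\alpha)$ I would expand the right side of \eqref{SLRdAdC} near $(0,L)$: writing it as $C^{-2}P(A,C)$ with $P$ smooth near $(L,0)$ and $P(L,0)=2L(2\alpha-L)>0$, one gets $\Phi'(C)\ge \tfrac12 P(L,0)\,C^{-2}$ for $C$ small, whence $\Phi(\delta)-\Phi(C)\to+\infty$ as $C\to 0^+$ and $\Phi(C)\to-\infty$, contradicting $\Phi>0$. Hence $L\in\{0,2\alpha\}$, which is (ii).

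\emph{Main obstacle.} The technical heart is Step 1: certifying $\phi$ as a genuine solution with full domain $(0,\infty)$, for which the uniform lower bound $\phi\ge 2\alpha$ — extracted from the monotonicity of $\phi_n$ on $[1/n,\infty)$ together with $g(1/n)\to 2\alpha$ — is the linchpin that makes the Arzelà–Ascoli/limit-of-solutions argument go through. The only other delicate point is the exclusion of intermediate limits in Step 3, which rests on the $C^{-2}$ blow-up of the right-hand side of \eqref{SLRdAdC} near the $A$-axis away from the singular point $(0,2\alpha)$; everything else is the now-familiar non-crossing and regular-point bookkeeping used earlier in the section.
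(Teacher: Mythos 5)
Your proposal is correct and follows essentially the same route as the paper: monotone decrease of $(\phi_n)$ via non-crossing at the points $1/n$ on the graph of $g$, passage to the limit in the integral form of \eqref{SLRdAdC} on compacta, and comparison of an arbitrary $\Phi$ against the $\phi_n$ (for claim (i)) and against $\phi$ and the line $A\equiv 0$ (for claim (ii)). The one place you go beyond the paper is welcome: the paper asserts $\phi(C)\to 2\alpha$ and disposes of claim (ii) with ``one verifies,'' whereas you actually supply the uniform lower bound $\phi\ge 2\alpha$ (from $\phi_m$ increasing below $g$ and $g(1/m)\to 2\alpha$) and rule out limits $L\in(0,2\alpha)$ by the $C^{-2}$ blow-up of $f$ away from $(0,2\alpha)$, where $C^2 f(A,C)\to 2L(2\alpha-L)>0$; both computations check out and close gaps the paper leaves implicit.
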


\begin{proof}
It follows from the proof of Lemma \ref{PhiC} that $\varphi_n(C)>g(C)$ for $C<\frac{1}{n}$. In particular, we have $\varphi_n(\frac{1}{n+1})>g(\frac{1}{n+1})=\varphi_{n+1}(\frac{1}{n+1}),$ so that (as a consequence of the uniqueness of solutions to the trajectory ODE)  $\varphi_n>\varphi_{n+1}$ everywhere. Relying on this monotonicity property, we verify that the pointwise limit $\phi(C):=\lim_{n \to \infty} \phi_n(C)$ exists for all $C\in (0, \infty)$.

To show that this function $\phi$ is a solution of \eqref{SLRdAdC}, we consider any pair of points $\hat C >C>0$, and calculate 
$$\varphi_n(\hat C)=\varphi_n(C)+\int_C^{\hat C}f\big(\varphi_n(c),c\big)\ dc,$$
from which it follows that
$$\varphi(\hat C)=\varphi(C)+\lim_{n\to\infty}\int_C^{\hat C}f\big(\varphi_n(c),c\big)\ dc.$$
We verify that the function $f(C,A)$ is bounded on the set 
 $\{(c,a):c\in[C,C'],a\in[g(c),\varphi_{k(C)}(c)]\}$, since $f$ is continuous and since this set is compact (here $k(C)$ is any element of $\N\cap(\frac{1}{C},\infty)$ such that $\varphi_k$ is defined for any $c\in[C,C']$).  We may therefore use  the dominated convergence theorem to show that 
$$\varphi(C')=\varphi(C)+\int_C^{C'}f(\varphi(c),c)\ dc. $$
It follows immediately that $\varphi$ is a solution of \eqref{SLRdAdC}.  Furthermore, since $\varphi_n(\frac{1}{n})=g(\frac{1}{n})$ and since $g(C)\to 2\alpha$ as $C\to 0$, we have $\varphi(C)\to 2\alpha$ as $C\to 0.$

Since the domain of $\phi$ is $(0,\infty)$, the graph of this solution clearly partitions the phase space. Uniqueness of solutions guarantees that solutions cannot cross $\phi$, and therefore must remain either above or below its graph. It readily follows that if a solution $\Phi$ satisfies the inequality $\Phi(c)>\varphi(c)$ for some $c$, then there exists some $n\in\N$ such that $\varphi_n(c)<\Phi(c).$ Then  since $\varphi_n(C)\to\infty$ as $C\to C_n^- $ , it follows that there exists some $C_\Phi^-  \geq C_n^- $ such that $\Phi(C)\to \infty$ as $C\to C_\Phi^- .$ If instead a solution $\Phi$ satisfies the inequality $\Phi(c)\leq\varphi(c)$ for some $c>0$, then one verifies that $\Phi(C)$ converges  either to $0$ or to $2\alpha$ as $C\to 0.$
\end{proof}

While this proposition describes the asymptotic behavior of the two classes of solutions, it says nothing about their life spans. We determine these now. For those solutions $(C(t),A(t))$  of  \eqref{SLRedC} and \eqref{SLRedA} which satisfy the initial data inequality $A_0>\varphi(C_0)$, it follows from Proposition \ref{propositionphi} that $A(t)$ (and $B(t)$ as well) become infinite, while $C(t)$ converges to a constant (pancake asymptotics). Examining the evolution equation for $A$, we have 
$$\frac{dA}{dt} \le 8 + 4\frac{C_0}{A},$$ 
with $A \to \infty$. This implies that these solutions are immortal. 

For the solutions with initial data $(C_0, A_0)$ satisfying the inequality $A_0\leq\varphi(C_0)$, Proposition \ref{propositionphi} together with the usual axis-avoiding arguments shows that these solutions all converge to either the origin, or to the point $(C,A) = (0, 2 \alpha).$ We have not determined the life span of those converging to $(0, 2\alpha)$, but we argue as follows that those converging to the origin do so in finite time. 
Using equations  \eqref{SLRedC} and \eqref{SLRedA}, we calculate 
$$\frac{d}{dt}(\frac{A}{C^2}) = \frac{8}{C^2}(1-\frac{2\alpha}{A}) + \frac{12}{CA}(1-\frac{2\alpha}{A})-\frac{6\alpha}{A^3}.$$
Since $A$ converges to zero,  there exists a time $t_1$ such that $A < 2\alpha$ for all subsequent times. It follows that $\frac{d}{dt}(\frac{A}{C^2}) < 0$ for $t > t_1$, and consequently  $-\frac{C^2}{A}<-\frac{C_1^2}{A_1} $ where $C_1 = C(t_1), $ and $A_1 = A(t_1). $ Assuming also that $A < 1,$ we have $-\frac{C^2}{A^2}<-\frac{C_1^2}{A_1} .$ Using these inequalities,  we determine that (for $t > t_1)$ 
\begin{align}
\frac{dC}{dt} &= -4\frac{C^2}{A^2} - 2\alpha\frac{C^3}{A^4}\\
&< -4\frac{C_1^2}{A_1}.
\end{align}
It follows that these solutions have  a finite extinction time. 

Combining these results concerning solution life span with the results of Proposition \ref{propositionphi}, we obtain the following theorem, which shows that, as with the Nil and Sol geometries, there is a solution which partitions the phase plane into a region in which the RG-2 and the Ricci flow are very similar asymptotically, and another in which they are very different. 
\begin{theorem} [RG-2 Flow for Locally Rotationally Symmetric $\text{SL}(2,\R)$ Geometries]
Let $(A(t),C(t))$ be solutions of \eqref{SLRedC} and \eqref{SLRedA} with $(C_0, A_0)=(C(0), A(0))$, and let $\phi$ be the limit solution determined in Proposition \ref{propositionphi}. 
\begin{enumerate} 
\label{SL2RSummary}
\item If $A_0>\varphi(C_0)$ then the solution is immortal with $A(t)\to\infty,$ and $C(t)\to\overline{C}\geq 0$ as $t\to\infty.$
\item If $A_0\leq\varphi(C_0)$ then either the solution converges to $(0,0)$ in finite time, or the solution converges to $(0,2\alpha).$
\end{enumerate}
\end{theorem}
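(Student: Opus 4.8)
The plan is to obtain this theorem as a short corollary of Proposition~\ref{propositionphi}, to which I only need to add two elementary life-span estimates for the system \eqref{SLRedC}--\eqref{SLRedA}. All of the delicate work --- producing the partitioning solution $\varphi$ as the limit of the approximating sequence $(\varphi_n)$, together with the supporting facts about $g$ and about the maximal domains of the $\varphi_n$ (Lemmas~\ref{g(C)} and \ref{PhiC} and the domain lemmas following) --- is already in hand, so what remains is bookkeeping plus two comparison arguments.

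First I would set up the dictionary between the orbit picture and the time picture. Equation \eqref{SLRedC} gives $dC/dt<0$ for all $C,A>0$, so along any solution $t\mapsto C(t)$ is a strictly decreasing reparametrization; hence the solution through $(C_0,A_0)$ traces out exactly the orbit $A=\Phi(C)$ with $\Phi(C_0)=A_0$, with $C(t)$ decreasing from $C_0$ toward $C_\Phi^-$ and $A(t)$ tending to $\lim_{C\to C_\Phi^-}\Phi(C)$ as $t\to\tmax$. By Lemma~\ref{PhiC} this limit exists (possibly $+\infty$), and by Proposition~\ref{propositionphi} it is precisely what the theorem asserts: $A(t)\to\infty$ and $C(t)\to\overline C:=C_\Phi^-\ge 0$ when $A_0>\varphi(C_0)$; and $C(t)\to 0$ with $A(t)\to 0$ or $A(t)\to 2\alpha$ when $A_0\le\varphi(C_0)$. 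This already delivers the asymptotic content of both cases, so only the statements about $\tmax$ remain.

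For $A_0>\varphi(C_0)$ I would argue immortality directly. Since $C$ is decreasing, $C(t)\le C_0$, so dropping the manifestly non-positive $\alpha$-term in \eqref{SLRedA} gives $dA/dt\le 8+4C_0/A$. By Proposition~\ref{propositionphi} we already know $A(t)\to\infty$, so for $t$ beyond some $t_0$ we have $A\ge C_0$ and hence $dA/dt\le 12$; thus $A$ grows at most linearly and cannot become infinite on a finite $t$-interval, so $\tmax=\infty$. For $A_0\le\varphi(C_0)$, in the subcase $A(t)\to 0$ I would compute from \eqref{SLRedC}--\eqref{SLRedA}
\[
\frac{d}{dt}\!\left(\frac{A}{C^2}\right)=\frac{8}{C^2}\!\left(1-\frac{2\alpha}{A}\right)+\frac{12}{CA}\!\left(1-\frac{2\alpha}{A}\right)-\frac{6\alpha}{A^3},
\]
which is negative once $A<2\alpha$; since $A(t)\to 0$ there is a time $t_1$ beyond which $A<\min\{2\alpha,1\}$, so $A/C^2$ decreases and $C^2/A^2\ge C^2/A\ge C_1^2/A_1>0$ for $t\ge t_1$ (with $C_1=C(t_1)$, $A_1=A(t_1)$), whence \eqref{SLRedC} yields $dC/dt<-4C_1^2/A_1$, a fixed negative constant, forcing $C$ --- and therefore the solution --- to the origin in finite time. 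In the remaining subcase $A(t)\to 2\alpha$ the theorem makes no life-span claim, so nothing further is needed.

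The main obstacle is not in this step at all: essentially everything difficult has been absorbed into Proposition~\ref{propositionphi} and its supporting lemmas. The only points requiring a little care here are (a) confirming that the ``$C\to C_\Phi^-$'' limits coming from the orbit analysis genuinely match the ``$t\to\tmax$'' limits of the flow --- which the strict monotonicity $dC/dt<0$ handles --- and (b) checking the sign of $\frac{d}{dt}(A/C^2)$ when $A<2\alpha$: both bracketed factors and the last term are then negative, so there is nothing positive to control and the estimate is robust.
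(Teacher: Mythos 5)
Your proposal is correct and follows essentially the same route as the paper: both treat the theorem as a corollary of Proposition~\ref{propositionphi} (which carries all the real weight), then add the estimate $\frac{dA}{dt}\le 8+4\frac{C_0}{A}$ for immortality in case (1) and the monotonicity of $A/C^2$ once $A<2\alpha$ to get the fixed negative bound $\frac{dC}{dt}<-4C_1^2/A_1$ and finite extinction in case (2). Your version is marginally more explicit about why the linear-growth bound on $A$ yields $\tmax=\infty$, but the argument is the same.
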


\end{document}